\numberwithin{subsection}{section}
\newtheoremstyle{note}
  {}
  {}
  {}
  {}
  {\bfseries}
  {.}
  { }
  {}
\newtheoremstyle{notes}
  {}
  {}
  {\itshape}
  {}
  {\bfseries}
  {.}
  { }
  {}
\theoremstyle{notes}
\newtheorem{satz}{Satz}[section]
\newtheorem{lemma}[satz]{Lemma}
\newtheorem{thm}[satz]{Theorem}
\newtheorem{prop}[satz]{Proposition}
\newtheorem{cor}[satz]{Corollary}
\theoremstyle{note}
\newtheorem{bem}[satz]{Remark}
\newtheorem{fact}[satz]{Fact}
\newtheorem{defn}[satz]{Definition}
\newtheorem{bsp}[satz]{Example}
\newtheorem{constr}[satz]{Construction and Definition}
\newtheorem{summ}[satz]{Summary}
\theoremstyle{remark}
\newtheorem{claim}[satz]{Claim}
\renewenvironment{proof}[1][\proofname]%
  {\par\addvspace{.6pc plus .2pc minus .1pc}
   \noindent
   {\itshape{#1.}}\enspace\ignorespaces}%
   {\qed}
  {\par\addvspace{.6pc plus .2pc minus .1pc}}
\def\proofname{Proof}
\newcommand{\N}{\mathbb{N}}                            
\newcommand{\Q}{\mathbb{Q}}														 
\newcommand{\R}{\mathbb{R}}                            
\newcommand{\pro}{\mathbb{P}}													 
\newcommand{\Nd}[1]{N^1(#1)_{\mathbb{R}}}							 
\newcommand{\Nc}[1]{N_1(#1)_{\mathbb{R}}}							 
\newcommand{\mor}[1]{\overline{\textup{NE}}({#1})}     
\newcommand{\mov}[1]{\overline{\textup{NM}}({#1})}     
\newcommand{\nio}[1]{\textup{NI}({#1})}								 
\newcommand{\nic}[1]{\overline{\textup{NI}}({#1})}		 
\newcommand{\eff}[1]{\overline{\textup{Eff}}({#1})}    
\newcommand{\nef}[1]{\textup{Nef}({#1})}							 
\newcommand{\eq}[1]{\textup{Eq}({#1})}								 
\newcommand{\polycone}{pmc-fourfold}						 			 
\newcommand{\pmc}{pmc-flip sequence}
\newcommand{\sE}{\mathscr{E}}     
\newcommand{\cO}{\mathcal{O}}     
\newcommand{\Ra}{\Rightarrow}			 
\newcommand{\ra}{\rightarrow}      
\newcommand{\bir}{\dashrightarrow} 
\newcommand{\xra}{\xrightarrow}    
\DeclareMathOperator{\ldot}{_{\cdot}}        
\DeclareMathOperator{\iso}{\cong}            
\newcommand{\nprop}{\sim_{\text{num}}}       
\newcommand{\eps}{\varepsilon}
\newcommand{\ph}{\varphi}
\newcommand{\nb}{\varphi_1^*}     
\newcommand{\np}{\varphi_{*1}}    
\newcommand{\npb}[1]{{#1}{_1^*}}  
\newcommand{\npf}[1]{{#1}{_{*1}}} 
\begin{document}

\title{The cone of moving curves of a smooth Fano three- or fourfold}

\author{Sammy Barkowski}
  
\address{Graduiertenkolleg Globale Strukturen, Mathematisches Institut der Universit\"at zu
  K\"oln, Weyertal 86--90, 50931 K\"oln, Germany}
\email{\href{mailto:SamBarkowski@googlemail.com}{SamBarkowski@googlemail.com}}

\date{February 02, 2009}

\begin{abstract}
We describe the closed cone of moving curves $\mov{X}\subset\Nc{X}$ of a smooth Fano three- or fourfold $X$ by finitely many linear equations. These equations are induced by the exceptional divisors of divisorial contractions and nef divisors on birational models of $X$ which are obtained by flips. The proof provides an inductive way to compute the cone $\mov{X}$ of moving curves and gives a description of the Mori cone of a variety $X^+$ obtained by a flip $\phi:X\bir X^+$ of a small contraction on $X$.
\end{abstract}
\maketitle

\setcounter{tocdepth}{1}

\tableofcontents
\section{Introduction}
We will give a characterisation of the cone $\mov{X}$ of moving curves of a smooth Fano three- or fourfold $X$ by a finite number of linear equations, which are related to exceptional divisors of divisorial contractions and nef divisors on birational models of $X$:
\begin{thm}[Main Theorem]\label{thm:mainthm}
Let $X$ be a smooth Fano fourfold. Then there exists an explicitely known, finite set $\eq{X}\subset\Nd{X}$ of divisor classes such that  \[\mov{X}=\{\gamma\in\Nc{X}\mid \gamma\ldot\Delta\geq0\text{ for all }\Delta\in\eq{X}\}.\]
In particular, $\mov{X}$ is a closed convex polyhedral cone in $\Nc{X}$.
\end{thm}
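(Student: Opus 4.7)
My approach is to combine the BDPP duality theorem for moving curves with an inductive analysis of the minimal model program. The BDPP theorem identifies $\mov{X}$ with the dual cone of the closed pseudo-effective cone $\eff{X}$, so it suffices to exhibit a finite explicit set of generators of $\eff{X}$: the dual facet inequalities will furnish the required set $\eq{X}$. Since $X$ is a smooth Fano fourfold, BCHM ensures that $X$ is a Mori Dream Space, so $\eff{X}$ is \emph{a priori} rational polyhedral; the content of the theorem is the explicit description of a generating set in terms of MMP data.

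I would run the $K_X$-MMP on $X$ and argue by induction on the complexity of the program (for instance on the length of the longest possible sequence of flips). Each extremal contraction falls into one of three cases. A fiber-type contraction terminates that branch of the induction. A divisorial contraction $X\to Y$ contributes the class of its exceptional divisor to $\eq{X}$. A small contraction is resolved by a flip $\varphi:X\bir X^+$; applying the inductive hypothesis to $X^+$ yields a finite set $\eq{X^+}$, whose image under the numerical pullback $\nb$ is then added to $\eq{X}$, together with any pulled-back nef divisor classes on $X^+$ that become new extremal classes of $\eff{X}$. Finiteness of the procedure is guaranteed by termination of MMPs for Fano varieties (BCHM).

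Both inclusions of the dual description must then be verified. The containment $\mov{X}\subset\{\gamma:\gamma\ldot\Delta\ge 0\text{ for all }\Delta\in\eq{X}\}$ is the easier direction: each $\Delta\in\eq{X}$ is pseudo-effective on $X$ (exceptional divisors obviously, and numerical pullbacks of nef divisors on birational models by the properties of $\nb$ developed earlier in the paper), so by BDPP any moving class pairs non-negatively with them. The reverse inclusion is the substantive one: one shows that every extremal ray of $\eff{X}$ is recorded during the inductive procedure. Rays represented by prime divisors which are exceptional for some divisorial contraction on $X$ are picked up directly, and the remaining extremal rays on $X$ must correspond, via $\nb$, to extremal rays of $\eff{X^+}$, which are handled by the inductive hypothesis on $X^+$.

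The main obstacle I anticipate is controlling the numerical pullback and pushforward across a flip. Because $\varphi$ is only a birational map, the usual morphism-theoretic functoriality does not apply, and one must prove, using the specific numerical formalism of the paper, that $\nb$ sends facet inequalities of $\mov{X^+}$ to facet inequalities of $\mov{X}$ and that no extremal ray of $\eff{X}$ is missed. This is where the codimension-two nature of the flipping locus should enter decisively, ensuring that divisor classes are preserved and allowing a clean transfer of the polyhedral structure between $X$ and $X^+$.
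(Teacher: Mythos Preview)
Your overall architecture---BDPP duality, then run the MMP and collect exceptional divisors together with pulled-back nef classes from the birational models---is exactly the paper's strategy, and the definition of $\eq{X}$ you sketch matches the paper's. However, the inductive step as you state it has a genuine gap.

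The problem is that after a flip $\phi:X\dashrightarrow X^+$ of a small extremal ray on a smooth Fano fourfold, the variety $X^+$ is smooth (by Kawamata's Theorem~\ref{thm:kawamata} and Corollary~\ref{cor:kawamata}) but is \emph{not} Fano in general: the flipped curve $s_0^1$ satisfies $[K_{X^+}]\cdot[s_0^1]=+1$. So you cannot apply your inductive hypothesis, which is stated only for smooth Fano fourfolds, to $X^+$. Invoking BCHM/Mori Dream Space structure does not fix this, since the MDS property is also not obviously inherited by $X^+$ once you leave the Fano world, and in any case BCHM alone does not give the explicit set $\eq{X}$. The paper addresses precisely this issue by enlarging the class of varieties to \emph{pmc-fourfolds} (Proposition~\ref{prop:inductionstep}): one proves directly, using Kawamata's description of the exceptional loci, that after a flip the Mori cone of $X^+$ is still polyhedral with only finitely many $K$-non-negative extremal curves, each with $K$-degree exactly $1$. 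This is what allows the induction to proceed.

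A second, related point: you do not invoke Kawamata's Theorem~\ref{thm:kawamata} at all, but in the paper it is the essential geometric input. Beyond guaranteeing smoothness of $X^+$, it supplies the explicit normal bundle $\cO(-1)^{\oplus 2}$ on the flipping $\pro^2$, which forces the intersection numbers in Equation~\eqref{eq:number_one}. These numbers are what drive the core argument (Proposition~\ref{prop:positive}, Claims~\ref{cl:l2} and~\ref{cl:positive}): one shows that the flipped curves $s_i^n$ are pairwise disjoint and disjoint from the next flipping locus $S_n$, which is exactly the control over numerical pullback across flips that you flag as ``the main obstacle'' but do not resolve. Without this, your claim that ``no extremal ray of $\eff{X}$ is missed'' remains unproved.
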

We will construct the set $\eq{X}$ explicitely in the following sections, and we will show by example that this set is actually computable.

The proof of the main theorem also applies to smooth Fano threefolds. We will thus deduce the following statement.
\begin{prop}\label{prop:mainprop}
Let $X$ be a smooth Fano threefold and let $\ph_i:X\ra X_i$ be the divisorial contractions of $X$, with exceptional divisors $E_i\subset X$ and extremal rays $\R_+[r_i]$, $i=1,\ldots,k$, of the Mori cone $\mor{X}$. Then
\[\mov{X}=\{\gamma\in\Nc{X}\mid \gamma\ldot\Delta\geq0\text{ for all }\Delta\in\left\{[E_1],\ldots,[E_k]\right\}\cup\nef{X}\}.\]
In particular, $\mov{X}$ is a closed convex polyhedral cone in $\Nc{X}$.
\end{prop}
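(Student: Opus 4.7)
The strategy is to pass to the dual formulation via the theorem of Boucksom--Demailly--P\u{a}un--Peternell, which identifies $\mov{X}$ as the dual cone of $\eff{X}$ in $\Nc{X}$. Under this duality, the proposition becomes equivalent to the identity
\[\eff{X} \;=\; \nef{X} \;+\; \sum_{i=1}^k \R_{\geq 0}\cdot[E_i]\]
as a Minkowski sum of subcones of $\Nd{X}$. The inclusion $\supseteq$ is immediate, since nef divisors and the exceptional divisors $E_i$ are pseudo-effective. Since $X$ is a smooth Fano threefold and hence a Mori dream space by Birkar--Cascini--Hacon--McKernan, $\eff{X}$ is rational polyhedral, so for the reverse inclusion it suffices to show that every extremal ray of $\eff{X}$ is either contained in $\nef{X}$ or is generated by some $[E_i]$.

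Let $\R_{\geq 0}\cdot[D]$ be an extremal ray of $\eff{X}$ with $D$ not nef. By the cone theorem there exists an extremal ray $\R_+[r_j]$ of $\mor{X}$ with $D\ldot r_j<0$. The contraction $\ph_j:X\ra X_j$ of $r_j$ cannot be a Mori fiber space contraction: a fiber curve of such a contraction sweeps out $X$ and therefore represents a class in $\mov{X}$, but then the relation $D\in\eff{X}=\mov{X}^\vee$ would force $D\ldot r_j\geq 0$. Hence $\ph_j$ is one of the divisorial contractions $\ph_i$ from the list, with exceptional divisor $E_j$ satisfying $E_j\ldot r_j<0$.

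To finish, I would exploit the splitting $\Nd{X} = \ph_j^*\Nd{X_j} \oplus \R\cdot[E_j]$ coming from the divisorial contraction to write $D = \ph_j^*\alpha + \mu\cdot[E_j]$ with $\alpha = \ph_{j*}D\in\Nd{X_j}$; intersecting with $r_j$ and using $\ph_j^*\alpha\ldot r_j=0$ yields $\mu=(D\ldot r_j)/(E_j\ldot r_j)>0$. Pushforward and pullback preserve pseudo-effectivity, so both $\ph_j^*\alpha$ and $\mu\cdot[E_j]$ lie in $\eff{X}$, and the extremality of $[D]$ forces each summand to be a nonnegative multiple of $[D]$; since $\mu>0$, this gives $[D]\in\R_{\geq 0}\cdot[E_j]$, as required. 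The main technical hurdles of this plan are citing the rational polyhedrality of $\eff{X}$ for smooth Fano threefolds and rigorously verifying the Mori-fiber-space exclusion via moving curves; the final decomposition step is then a routine application of extremality to the pushforward/pullback splitting attached to $\ph_j$.
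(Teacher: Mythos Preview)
Your proposal is correct and provides a genuinely different argument from the paper's.

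The paper works on the curve side: given $\gamma$ non-negative on $\nef{X}$ and on the $[E_i]$, it writes $\gamma$ as a non-negative combination of extremal ray generators of $\mor{X}$ and checks directly that each generator has non-negative intersection with any irreducible divisor $D$ whose class is not already in the cone spanned by $\nef{X}$ and the $[E_i]$. For a fiber-type ray this is clear since $D$ cannot contain every fiber; for a divisorial ray with exceptional divisor $E_j$, one finds a curve in $E_j$ proportional to the ray but not contained in $D$. No prior knowledge of the shape of $\eff{X}$ is needed, and in particular BCHM is not invoked.

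You instead dualize via BDPP and prove the structural identity $\eff{X}=\nef{X}+\sum_i\R_{\geq0}[E_i]$ by analyzing extremal rays of $\eff{X}$. This is cleaner as a statement, but it costs you the rational polyhedrality of $\eff{X}$ up front, which you import from the Mori-dream-space property of smooth Fano varieties (BCHM). Your Mori-fiber exclusion via movable fiber curves and your final extremality/splitting argument are both valid. One omission worth making explicit: after ruling out the fiber case you jump to ``hence $\ph_j$ is one of the divisorial contractions from the list''; this uses the standard fact that a smooth projective threefold admits no small extremal contractions, which the paper also cites at exactly this point.

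Both approaches ultimately hinge on the same dichotomy (fiber vs.\ divisorial) for threefold contractions, but the paper's curve-side argument is more elementary and self-contained, while yours yields a sharper description of $\eff{X}$ at the price of a heavier external input.
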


There are several authors who have been working on the cone of moving curves. As a consequence of their main theorem in the preprint \cite{bchm}, C. Birkar, P. Cascini, C. Hacon and J. M$^{\textup{c}}$Kernan obtain that the cone of moving curves of a Fano $n$-fold is polyhedral.

In contrast to our approach, they describe the cone in terms of extremal rays which can be obtained as pullbacks of rational curves lying in a general fibre of a Mori fibre space obtained by running the minimal model program. 

This idea was introduced in a more general setup in the paper \cite{Bat} by V. V. Batyrev. There he gives a structure theorem for the moving cone of a threefold, which was finally proved in the preprint \cite{Arau} by C. Araujo, but see \cite[Remark~3.4]{Arau}. Very recently, B. Lehmann published the preprint \cite{leh}. He proves a version of this structure theorem for higher-dimensional varieties. The author has been informed that Alex K\"uronya and Endre Szab\'o have independently obtained a description of the moving curves of manifolds in dimension three and four.

The proof of Theorem~\ref{thm:mainthm} and Proposition~\ref{prop:mainprop} rely on the famous result \cite[Theorem~2.2]{bdpp} of Bucksom, Demailly, Paun and Peternell. In the case of a smooth Fano fourfold we additionally employ a theorem \cite[Theorem~1.1]{kaw} of Y. Kawamata that describes flips in detail.
\subsection*{Acknowledgments}

The results presented here are part of the author's Ph.D. thesis \cite{diss}, 
written under the supervision of Stefan Kebekus. The author would like to 
thank the Graduiertenkolleg ``Globale Strukturen in Geometrie und Analysis'' of the Deutsche Forschungsgemeinschaft, DFG, which granted him a full scholarship. He would also like to thank James \hbox{M$^{\textup{c}}$Kernan}, Se\'an Keel, Alex K\"uronya, Carolina Araujo, Laurent Bonavero and particularly Cinzia Casagrande for answering his questions. Moreover, he is thankful to Laurent Bonavero and Andreas H\"oring for some inspiring discussions during the summer school ``Geometry of complex projective varieties and the minimal model program'' in 2007 and a workshop in 2008 at the Institut Fourier in Grenoble.
\section{Flips for smooth fourfolds}\label{moving}
\subsection{Essential results}
In this short section we will cite the results which will be the technical basis for the proof of Theorem~\ref{thm:mainthm}.
The first theorem allows us to describe the moving cone in terms of linear equations.
\begin{thm}[see \protect{\cite[Theorem~2.2~and~Theorem~2.4]{bdpp}}]\label{thm:eq}
Let $X$ be an irreducible projective variety of dimension $n$. Then the cone $\mov{X}$ of moving curves and the cone $\eff{X}$ of pseudoeffective divisors are dual; in other words, \[\mov{X}=\{\gamma\in\Nc{X}\mid \gamma\cdot\Delta\geq 0,\text{ for all }\Delta\in\eff{X}\}.\]\qed
\end{thm}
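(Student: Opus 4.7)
The plan is to prove the two inclusions of the claimed duality separately. The easy inclusion $\mov{X}\subseteq\{\gamma\mid\gamma\cdot\Delta\geq 0\text{ for all }\Delta\in\eff{X}\}$ follows from the very definition of a moving curve: such a $\gamma$ is a limit of classes $[C_t]$ of irreducible curves belonging to an algebraic family whose general member covers $X$. Given any effective Cartier divisor $D$, a general member $C_t$ of the family is not contained in $\mathrm{Supp}(D)$, so $C_t\cdot D\geq 0$. Extending by linearity to $\Q$-effective classes and then by continuity to $\eff{X}$, and finally passing to the limit in $t$, one obtains $\gamma\cdot\Delta\geq 0$ for every pseudoeffective $\Delta$.

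For the hard inclusion, I would argue by contradiction using Hahn--Banach separation in $\Nc{X}$. Assume $\gamma$ lies in the dual cone of $\eff{X}$ but not in $\mov{X}$. Since $\mov{X}$ is a closed convex cone, there exists a class $\alpha\in\Nd{X}$ with $\alpha\cdot\gamma<0$ while $\alpha\cdot[C]\geq 0$ for every strongly movable curve class $[C]$ (using Boucksom--Demailly--Paun--Peternell's preferred generating set of $\mov{X}$, namely images under birational morphisms of complete intersections of very ample classes). The goal is then to derive a contradiction by proving that such an $\alpha$ is forced to be pseudoeffective, which would conflict with $\alpha\cdot\gamma<0$.

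The core of the proof, and what I anticipate to be the main obstacle, is precisely the statement that a non-pseudoeffective class $\alpha$ must be strictly negative on some strongly movable curve. My plan here is to invoke the analytic machinery developed by Boucksom: one replaces the algebraic setting with $(1,1)$-currents, defines the divisorial Zariski (or $\sigma$-) decomposition, and introduces the movable intersection product $\langle\alpha_1\cdots\alpha_{n-1}\rangle$ of pseudoeffective classes via regularized approximations of positive currents. The key analytic input is Fujita approximation for big classes, which identifies the volume $\mathrm{vol}(\alpha)$ with $\langle\alpha^n\rangle$, together with an orthogonality estimate relating the negative part of the Zariski decomposition to volumes via Calabi--Yau / Monge--Amp\`ere mass concentration. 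From this orthogonality, assuming $\alpha$ is big and writing $\alpha=P+N$, one shows $P\cdot N=0$ in the sense of movable intersections, which forces $\alpha\cdot\langle A^{n-1}\rangle\geq 0$ for any ample $A$, and by rescaling arguments extends to the non-big case via approximation by $\alpha+\varepsilon A$.

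Once this analytic step is in hand, the contradiction is immediate: the class $\langle A^{n-1}\rangle$ (or the limit of such intersections on suitable modifications) is the class of a strongly movable curve, so the hypothesis $\alpha\cdot[C]\geq 0$ for all strongly movable $[C]$ combined with the above yields pseudoeffectivity of $\alpha$, contradicting $\alpha\cdot\gamma<0$. Thus $\gamma\in\mov{X}$, completing the duality. The whole proof therefore reduces cleanly to the algebraic Hahn--Banach step plus the BDPP orthogonality theorem, and only the latter requires genuinely analytic tools.
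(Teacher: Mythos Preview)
The paper does not prove this theorem at all: the statement is followed immediately by \qedsymbol, and the surrounding text makes clear that it is quoted from \cite{bdpp} as a black box (``The reader who is not familiar with the subject may take this statement as the definition for the moving cone''). There is therefore no proof in the paper to compare your proposal against.

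That said, your outline is a fair summary of the actual BDPP argument: the easy inclusion is elementary, and the hard inclusion proceeds via Hahn--Banach separation together with the analytic orthogonality estimate for the divisorial Zariski decomposition and Fujita approximation, which forces any class non-negative on all strongly movable curves to be pseudoeffective. Just be aware that the orthogonality step is genuinely deep and not something one can reproduce in a few lines; in the context of this paper it is appropriate simply to cite the result, as the author does.
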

The reader who is not familiar with the subject may take this statement as the definition for the moving cone. For a detailed treatment see \cite[Chapter~1]{diss}.

If the Mori cone $\mor{X}$ of a smooth Fano fourfold $X$ has a small extremal ray, we have to use the flip of the corresponding small contraction to obtain some of the linear equations which cut out the moving cone $\mov{X}$ of $X$. The following result, which is due to Kawamata, gives a precise description of the exceptional locus of a small contraction on a smooth fourfold. 
\begin{thm}[\protect{\cite[Theorem~1.1]{kaw}}]\label{thm:kawamata}
Let $X$ be a non-singular projective variety of dimension four and let $\ph:X\ra Y$ be a small contraction. Then the exceptional locus $S$ of $\ph$ is a disjoint union of its irreducible components $S_i$, $i=1\ldots n$, such that $S_i\iso\pro^2$ and $N_{S_i/X}\iso\cO_{\pro^2}(-1)\oplus\cO_{\pro^2}(-1)$, where $N_{S_i/X}$ denotes the normal bundle of $S_i$ in $X$.\qed
\end{thm}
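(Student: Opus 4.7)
The plan is to pin down the structure of the exceptional locus by first determining its dimension via the Ionescu--Wi\'sniewski inequality for extremal contractions, then identifying each component as $\pro^2$ via the classification of Fano surfaces of Picard number one, and finally computing the normal bundle from the standard conormal sequence. The technical engine throughout is Mori's deformation theory of rational curves on the smooth variety $X$.

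First I would analyze dimensions. For the $K_X$-negative extremal contraction $\ph$ with contracted ray $R$ of length $l(R)$, every non-trivial fiber $F$ and the exceptional locus $\textup{Exc}(\ph)$ satisfy $\dim F + \dim \textup{Exc}(\ph) \ge \dim X + l(R) - 1$. Since $\ph$ is small, $\dim \textup{Exc}(\ph) \le 2$, and every fiber lies in $\textup{Exc}(\ph)$. Plugging in $\dim X = 4$ and $l(R) \ge 1$ forces $\dim F = 2$, $\dim \textup{Exc}(\ph) = 2$ and $l(R) = 1$. The connectedness of fibers of Mori contractions then shows that each irreducible component $S_i$ of the exceptional locus is a full fiber, and the $S_i$ are pairwise disjoint.

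Next I would identify each $S_i$ with $\pro^2$. Since $-K_X$ is $\ph$-ample and $S_i$ maps to a point, $-K_X|_{S_i}$ is ample, so $S_i$ is Fano. Because $N_1(S_i)$ injects into $N_1(X)$ and lands in the one-dimensional extremal ray, the Picard number $\rho(S_i)$ equals $1$. Mori's bend-and-break together with $l(R)=1$ produces a rational curve $C \subset S_i$ with $-K_X \cdot C = 1$; a careful deformation-theoretic study (using that $\textup{Hom}(\pro^1, X)$ is smooth of the expected dimension at $C$ since $X$ is smooth, and that deformations of $C$ sweep out $S_i$) would show that $S_i$ is smooth at every point. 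A smooth Fano surface of Picard number one is necessarily $\pro^2$.

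Finally, to compute $N_{S_i/X}$ I would restrict to a line $C\subset S_i\iso\pro^2$: from $\deg T_X|_C - 2 = \deg N_{C/X}$ and $-K_X\cdot C = 1$ we get $\deg N_{C/X} = -1$. The normal bundle sequence
\[0 \ra N_{C/S_i} \ra N_{C/X} \ra N_{S_i/X}|_C \ra 0\]
with $N_{C/S_i}\iso\cO_{\pro^1}(1)$ forces $\deg N_{S_i/X}|_C = -2$. Rigidity of $C$ modulo $S_i$ (deformations of $C$ stay inside the single fiber $S_i$) rules out positive summands in $N_{S_i/X}|_C$, pinning the splitting type on every line to $(-1,-1)$. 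A uniform rank-two bundle on $\pro^2$ with splitting type $(-1,-1)$ splits globally as $\cO_{\pro^2}(-1)\oplus\cO_{\pro^2}(-1)$, giving the claim.

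The main obstacle will be the rigorous proof that $S_i$ is smooth and isomorphic to $\pro^2$: the deformation-theoretic analysis of length-one rational curves on a smooth fourfold, and the verification that the evaluation from the universal family realizes $S_i$ as a smooth $\pro^2$, is the technical core of Kawamata's theorem and where I expect most of the work to lie.
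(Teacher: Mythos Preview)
The paper does not prove this theorem at all: it is quoted verbatim from Kawamata's paper \cite{kaw} and closed with a \qed\ immediately after the statement. There is therefore no ``paper's own proof'' to compare against; the result is used as a black box throughout (most notably to feed into Corollary~\ref{cor:kawamata}, Remark~\ref{bem:kawamata}, and Lemma~\ref{lem:explicit}).

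Your outline is a reasonable modern sketch of how one would attack Kawamata's theorem, and you correctly identify the genuinely hard step: establishing smoothness of $S_i$ and the isomorphism $S_i\iso\pro^2$ via deformation theory of length-one rational curves. Two small cautions if you pursue this. First, the assertion that $N_1(S_i)$ \emph{injects} into $N_1(X)$ is not automatic for a surface in a fourfold; what you actually know is that every curve in $S_i$ maps into the one-dimensional ray $R\subset N_1(X)$, and you need an additional argument (e.g.\ via restriction of an ample class and $-K_X$) to conclude $\rho(S_i)=1$. Second, the Ionescu--Wi\'sniewski inequality postdates Kawamata's 1989 paper, so his original argument proceeds differently (direct Mori-theoretic deformation estimates); your route is cleaner but not the historical one. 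None of this matters for the present paper, which only consumes the statement.
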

Moreover, Kawamata proves the following corollary.
\begin{cor}[\protect{\cite[Corollary~1.2]{kaw}}]\label{cor:kawamata}
Let $\ph:X\ra Y$ be as in Theorem~\ref{thm:kawamata}. Then there exists a flip
\begin{equation}\label{eq:flipdiagram}
  		\xymatrix{
      					X \ar@{-->}[rr]^{\phi} \ar[rd]_{\ph}  &     &  X^+ \ar[dl]^{\ph^+}  \\
                             													&  Y  &
  						 }
\end{equation}
where $X^+$ is a non-singular projective variety.\qed
\end{cor}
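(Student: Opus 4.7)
\medskip
\noindent\textit{Proof proposal.} The plan is to construct the flip $\phi:X\bir X^+$ by blowing up $X$ along $S$ and then contracting the resulting exceptional divisor in a different direction, exploiting the explicit description of $N_{S_i/X}$ supplied by Theorem~\ref{thm:kawamata}.

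First, since $S=\bigsqcup_{i}S_i$ is a disjoint union of smooth $\pro^2$'s, the blow-up $\pi:\tilde X\ra X$ along $S$ is again smooth, with exceptional divisor $E=\bigsqcup_i E_i$. The identity $N_{S_i/X}\iso\cO(-1)\otimes\cO^{\oplus 2}$, together with the invariance of projectivisations under twisting by a line bundle, yields an isomorphism $E_i\iso\pro^2\times\pro^1$ in which $\pi|_{E_i}$ is the first projection and $\cO_{E_i}(E_i)\iso\cO(-1,-1)$. Consequently the second projection $q_i:E_i\ra\pro^1$ realises $E_i$ as a $\pro^2$-bundle whose $\pro^2$-fibres carry $\cO_{E_i}(E_i)|_{\textup{fibre}}\iso\cO_{\pro^2}(-1)$.

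Second, I would apply the Fujiki--Nakano contractibility criterion (or its algebraic analogue due to Ancona) to $q_i$: because $\cO_{E_i}(E_i)$ has degree $-1$ on the $\pro^2$-fibres, one may contract all of the $E_i$ simultaneously along $q_i$, producing a smooth variety $X^+$ and a morphism $\pi^+:\tilde X\ra X^+$ that is an isomorphism outside $E$ and collapses each $E_i$ onto a smooth rational curve $C_i\iso\pro^1\subset X^+$. Running the projectivisation analysis in reverse identifies the normal bundle as $N_{C_i/X^+}\iso\cO_{\pro^1}(-1)^{\oplus 3}$, so that $\pi^+$ is itself the blow-up of $\bigsqcup_i C_i$ in $X^+$. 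Since $\ph\circ\pi$ is constant on each $\pi^+$-fibre it factors as $\ph^+\circ\pi^+$ with $\ph^+:X^+\ra Y$ an isomorphism outside $\bigsqcup_i C_i$, hence small; projectivity of $X^+$ and of $\ph^+$ is then obtained by exhibiting a $\ph^+$-ample divisor of the form $a\,\pi^*H-bE$ for $H$ ample on $X$ and suitable $a,b>0$.

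The flip inequality $K_{X^+}\cdot C_i>0$ follows from the two blow-up formulas $K_{\tilde X}=\pi^*K_X+E=(\pi^+)^*K_{X^+}+2E$: intersecting with a $\pi$-fibre $\ell\subset E_i$ (for which $\pi_*\ell=0$, $(\pi^+)_*\ell=[C_i]$ and $E\cdot\ell=-1$) gives $K_{X^+}\cdot C_i=1$, so $\ph^+$ is indeed the flip of $\ph$. The main obstacle will be the contraction step itself: the classical Fujiki--Nakano criterion produces $X^+$ only as a complex manifold, and one must check that $X^+$ is genuinely projective and that the local contractions near the individual components $S_i$ glue to a single projective morphism $\pi^+:\tilde X\ra X^+$. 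Once this algebraisation is secured, the remaining verifications---smoothness of $X^+$, smallness of $\ph^+$, and the flip inequality---reduce to the intersection-theoretic bookkeeping indicated above.
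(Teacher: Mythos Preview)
Your proposal is correct and follows precisely the construction the paper sketches in Remark~\ref{bem:kawamata} (which in turn is Kawamata's original argument): blow up the exceptional locus $S$, identify each component of the exceptional divisor as $\pro^2\times\pro^1$ with normal bundle $\cO(-1,-1)$, and contract along the other ruling to obtain the smooth flip $X^+$. The paper itself does not prove the corollary but cites \cite[Corollary~1.2]{kaw} directly; your write-up supplies exactly the details (Fujiki--Nakano contraction, the blow-up formulas for $K_{\tilde X}$, and the resulting intersection $K_{X^+}\cdot C_i=1$) that the remark leaves implicit, and you correctly flag the projectivity/algebraisation of the Fujiki--Nakano step as the one point requiring genuine care.
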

\begin{bem}\label{bem:kawamata}
The flip is constructed as follows. If we blow up the exceptional locus $S$ in $X$, then the exceptional divisor of the blow-up is a disjoint union of irreducible components $E_i\iso\pro^2\times\pro^1$. Furthermore, the normal bundle of each $E_i$ is isomorphic to $pr_1^*(\cO_{\pro^2}(-1))\oplus pr_2^*(\cO_{\pro^1}(-1))$, where $pr_1$, resp.~$pr_2$, denotes the projection on the first, resp.~second, factor of $\pro^2\times\pro^1$. By contracting the exceptional divisor in the other direction, we obtain a smooth projective variety $X^+$ and the commutative flip-diagram~\eqref{eq:flipdiagram}, but see \cite[Corollary~1.2]{kaw}.
\end{bem}
\begin{bem}[\protect{\cite[Remark~4.18]{diss}}]\label{rmk:negpb}
Let $X$ be as in Theorem~\ref{thm:kawamata} with flip diagram~\eqref{eq:flipdiagram}. Moreover, let $\gamma$ be the class of a line $g$ which lies in a fibre of $\ph$ and let $\gamma^+$ be the class of a curve $g^+$ which lies in a fibre of $\ph^+$.
\begin{fact} The curve $g^+$ is isomorphic to $\pro^1$, and an elementary computation using the normal bundle sequence shows that \[N_{g/X}\iso\cO_g(1)\oplus(\cO_g(-1)^{\oplus 2})\text{ and }N_{g^+/X^+}\iso\cO_{g^+}(-1)^{\oplus 3}.\] Taking the degree of the first Chern classes in the normal bundle sequences for $N_{g/X}$ and $N_{g^+/X^+}$ yields that \begin{equation}\label{eq:number_one}\gamma^+\ldot[K_{X^+}]=1=-\gamma\ldot[K_X].\end{equation} This equation will be very useful in the proof of the main theorem.
\end{fact}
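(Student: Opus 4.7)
The claim has three parts: the normal bundle splitting of $N_{g/X}$, the isomorphism $g^{+}\iso\pro^{1}$ together with the splitting of $N_{g^{+}/X^{+}}$, and the intersection number identity~\eqref{eq:number_one}. I would work throughout on the common resolution $W$ of Remark~\ref{bem:kawamata}, writing $f:W\ra X$ and $f^{+}:W\ra X^{+}$ for the two blow-down maps and $E_{i}\iso\pro^{2}\times\pro^{1}$ for the exceptional components.

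For $N_{g/X}$: since $g$ is a line in $S_{i}\iso\pro^{2}$ and Theorem~\ref{thm:kawamata} gives $N_{S_{i}/X}\iso\cO_{\pro^{2}}(-1)^{\oplus 2}$, the normal bundle sequence for $g\subset S_{i}\subset X$ presents $N_{g/X}$ as an extension of $\cO_{g}(-1)^{\oplus 2}$ by $\cO_{g}(1)$. The relevant obstruction group $\mathrm{Ext}^{1}(\cO_{g}(-1)^{\oplus 2},\cO_{g}(1))=H^{1}(\pro^{1},\cO(2))^{\oplus 2}$ vanishes, so the extension splits. The tangent bundle sequence $0\ra T_{g}\ra T_{X}|_{g}\ra N_{g/X}\ra 0$ then yields $\deg(K_{X}|_{g})=-(\deg T_{g}+\deg N_{g/X})=-(2-1)=-1$, i.e.\ $-\gamma\cdot K_{X}=1$.

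For $g^{+}$ and $N_{g^{+}/X^{+}}$: by Remark~\ref{bem:kawamata}, $f^{+}$ contracts each $E_{i}$ along its second projection, so $f^{+}(E_{i})$ is a smooth rational curve $g_{i}^{+}\iso\pro^{1}$; being irreducible in a fibre of $\ph^{+}$, $g^{+}$ must coincide with one such $g_{i}^{+}$, whence $g^{+}\iso\pro^{1}$. Since $f^{+}$ is an isomorphism off $\bigsqcup E_{i}$ and contracts each $E_{i}$ onto the smooth curve $g_{i}^{+}$ in the smooth fourfold $X^{+}$, it is the blow-up along $\bigsqcup g_{i}^{+}$, and $E_{i}\iso\pro(N_{g_{i}^{+}/X^{+}}^{\vee})$ as a $\pro^{2}$-bundle over $\pro^{1}$. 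Triviality of this bundle forces $N_{g_{i}^{+}/X^{+}}^{\vee}\iso\cO_{\pro^{1}}(a)^{\oplus 3}$ for some $a\in\mathbb{Z}$. I would determine $a$ by comparing the two incarnations of $\cO_{W}(E_{i})|_{E_{i}}$ as the tautological bundle on $\pro(N_{S_{i}/X}^{\vee})$ and on $\pro(N_{g_{i}^{+}/X^{+}}^{\vee})$, and restricting to a section $\{\mathrm{pt}\}\times\pro^{1}$ of the second projection; this forces $a=1$, whence $N_{g^{+}/X^{+}}\iso\cO_{g^{+}}(-1)^{\oplus 3}$. The identity $\gamma^{+}\cdot K_{X^{+}}=-(2-3)=1$ then follows from the tangent bundle sequence on $g^{+}$ exactly as before.

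A cleaner alternative that bypasses the explicit form of $N_{g^{+}/X^{+}}$ is to use the canonical bundle formulas $K_{W}=f^{*}K_{X}+E_{i}=(f^{+})^{*}K_{X^{+}}+2E_{i}$ and intersect with a $pr_{2}$-section $\tilde g^{+}\subset E_{i}$: the projection formula reduces~\eqref{eq:number_one} to the single computation $\tilde g^{+}\cdot E_{i}=-1$, which follows immediately from the description of $\cO_{E_{i}}(E_{i})$ given in Remark~\ref{bem:kawamata}. I expect the main technical obstacle to be the pin-down of $a$, which requires care with Grothendieck's convention for $\pro(V^{\vee})$ and with the twisting identity $\cO_{\pro(V\otimes L)}(1)=\cO_{\pro(V)}(1)\otimes\pi^{*}L$; once these are fixed, everything else is elementary.
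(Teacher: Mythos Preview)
Your proposal is correct and follows precisely the route the paper sketches: the paper itself gives no detailed argument, only the two hints ``an elementary computation using the normal bundle sequence'' and ``taking the degree of the first Chern classes in the normal bundle sequences'', which is exactly your primary approach via $g\subset S_i\subset X$ and the tangent sequence on $g$ and $g^+$. Your alternative route through the canonical bundle formulas $K_W=f^*K_X+E_i=(f^+)^*K_{X^+}+2E_i$ on the common resolution is a genuine addition not in the paper; it has the advantage of yielding~\eqref{eq:number_one} directly from the single computation $\tilde g^+\cdot E_i=-1$ without first pinning down $N_{g^+/X^+}$, and so sidesteps the convention-sensitive identification of the tautological bundle that you rightly flag as the delicate point. (Note incidentally that the $\oplus$ in the description of $N_{E_i/W}$ in Remark~\ref{bem:kawamata} must be a typo for $\otimes$, since this is a line bundle; your computations use the correct $\cO(-1,-1)$.)
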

\end{bem}
We will now introduce a method to take pullbacks or pushforwards of $1$-cycles via a flip, which will be used throughout the whole paper. For more details see \cite[Section~3]{Arau} or \cite[Chapter~3]{diss}.
\begin{fact}[\protect{\cite[Definition~3.1]{Arau}}]\label{fact:fact}
Let $\ph:X\bir Y$ be a birational map between smooth projective varieties which is an isomorphism in codimension one. The pullback of $\Q$-Cartier divisors on $Y$ via $\ph$ yields an injective linear map \[\ph^*:\Nd{Y}\hookrightarrow\Nd{X},\] 
between the Néron-Severi vector spaces of numerical equivalence classes of $\R$-divisors on $Y$, and on $X$. Analogously, the pushforward of $\Q$-Cartier divisors on $X$ via $\ph$ yields a surjective linear map \[\ph_*:\Nd{X}\twoheadrightarrow\Nd{Y}\]
such that $\ph_*\circ\ph^*=\textup{id}_{\Nd{Y}}$.
\end{fact}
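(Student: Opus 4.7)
The plan is to exploit the hypothesis that $\ph$ is an isomorphism in codimension one: there exist open subsets $U\subset X$ and $V\subset Y$ with complements of codimension at least two such that $\ph|_U\colon U\xrightarrow{\sim}V$ is an isomorphism. First I would use this to define $\ph^*$ on divisors: for a prime divisor $D$ on $Y$ meeting $V$, send it to the strict transform $\ph^{-1}_*D$, i.e.~the closure in $X$ of $(\ph|_U)^{-1}(D\cap V)$; since $Y\setminus V$ has codimension $\geq 2$, every prime divisor on $Y$ meets $V$, so extending by $\Q$-linearity yields a $\Q$-linear map on $\Q$-Cartier divisors. Define $\ph_*$ analogously using $\ph^{-1}$.

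Next I would verify that these maps descend to numerical equivalence classes, which is the substantial step. Choose a smooth common resolution
\[
p\colon W\to X,\qquad q\colon W\to Y,\qquad \ph\circ p=q,
\]
and check that $\ph^*D$ (as defined above) agrees with $p_*(q^*D)$ modulo divisors supported in $X\setminus U$; since the latter has codimension $\geq 2$, such divisors are zero, so $\ph^*D=p_*q^*D$. Numerical equivalence is preserved by $q^*$ and by $p_*$ (the latter via the projection formula on smooth projective varieties), so $\ph^*$ factors through $\Nd{Y}\to\Nd{X}$. The same argument with the roles of $p$ and $q$ swapped handles $\ph_*$.

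Then I would compute the composition $\ph_*\circ\ph^*$ on a prime divisor $D$ on $Y$: taking the strict transform into $X$ and then back into $Y$ recovers $D$ itself, because over the open set $V$ (whose complement has codimension $\geq 2$ and therefore carries no prime divisors) the composite map is literally the identity. Extending linearly and passing to $\Nd{Y}$ yields $\ph_*\circ\ph^*=\textup{id}_{\Nd{Y}}$. Injectivity of $\ph^*$ and surjectivity of $\ph_*$ then follow immediately from this identity.

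The hard part is step two, namely showing independence of the construction from the chosen representative divisor and from the common resolution, and the vanishing of the exceptional contributions of $p$ in $\Nd{X}$. Everything is transparent over $U$ and $V$; the real content is that the codimension-two exceptional loci can be ignored numerically, which is where the projection formula and the codimension hypothesis combine to do the work.
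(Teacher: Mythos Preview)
The paper does not supply a proof of this statement: it is recorded as a \emph{Fact} with a citation to \cite[Definition~3.1]{Arau} (and the surrounding text refers the reader to \cite[Chapter~3]{diss} for details), so there is no argument in the paper to compare against. Your proof plan is correct and is essentially the standard one: define $\ph^*$ and $\ph_*$ by strict transform, use a common resolution $p\colon W\to X$, $q\colon W\to Y$ to identify $\ph^*D$ with $p_*q^*D$ and thereby pass to numerical classes, and read off $\ph_*\circ\ph^*=\mathrm{id}$ from the fact that strict transform back and forth over the open sets $U$ and $V$ is the identity on prime divisors. One small remark: your argument is symmetric in $\ph$ and $\ph^{-1}$, so it actually gives $\ph^*\circ\ph_*=\mathrm{id}_{\Nd{X}}$ as well, hence both maps are isomorphisms; the paper's phrasing (injective/surjective) is simply weaker than what holds in this smooth setting.
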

\begin{defn}
Let \[\nb:\Nc{Y}\hookrightarrow\Nc{X}\] be the dual linear map of the pushforward $\ph_*:\Nd{X}\twoheadrightarrow\Nd{Y}$ of $\R$-divisors on $X$ and let \[\np:\Nc{X}\twoheadrightarrow\Nc{Y}\] be the dual linear map of the pullback $\ph^*:\Nd{Y}\hookrightarrow\Nd{X}$ of $\R$-divisors on $Y$. We call $\nb$ the \emph{numerical pullback via} $\ph$ and $\np$ the \emph{numerical pushforward via} $\ph$.
\end{defn}
The numerical pullback and the numerical pushforward satisfy some useful properties, as a projection formula, by definition.

The following is immediate from the definition. For a detailed proof, see \cite[Chapter~4.2.2]{diss}.
\begin{lemma}\label{lem:explicit}
Let $X$ be a smooth fourfold and let $\ph:X\ra Y$ be a small contraction with flip diagram \eqref{eq:flipdiagram}.
\begin{enumerate}[(i)]
\item If $\gamma$ is the class of a line which lies in a fibre of $\ph$ and $\gamma^+$ is the class of a curve which lies in a fibre of $\ph^+$, then 
\[
\npb{\phi}(\gamma^+)=-\gamma.
\]
\item Let $c^+$ be an irreducible curve on $X^+$. Then $\npb{\phi}([c^+])$ is an effective class if and only if $c^+$ is not contained in the exceptional locus $S^+$ of the flipped small contraction $\ph^+$. More precisely, if $c^+$ is not contained in $S^+$, then \[\npb{\phi}([c^+])=[c]+k[r_s],\] 
where $c$ is the strict transform of $c^+$, $k\geq0$ with equality iff $c^+$ is disjoint from $S^+$ and $r_s$ is an irreducible curve which is contained in a fibre of the small contraction $\ph$. The analogous statement holds for the numerical pushforward of a curve which is not contracted by $\ph$.
\end{enumerate}\qed
\end{lemma}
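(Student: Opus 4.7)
My plan is to establish (i) by checking the claimed identity after pairing with a spanning set of divisor classes, and then to deduce (ii) in two cases by combining (i) with a calculation on the common resolution of the flip~\eqref{eq:flipdiagram}.

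For~(i), I exploit that $\ph$ is a small extremal contraction, so $\ph^{*}:\Nd{Y}\hookrightarrow\Nd{X}$ is injective with image exactly the hyperplane $\gamma^{\perp}\subset\Nd{X}$. Since $\gamma\ldot[K_X]=-1$ by~\eqref{eq:number_one}, the class $[K_X]$ lies outside this hyperplane, yielding $\Nd{X}=\ph^{*}\Nd{Y}\oplus\R[K_X]$. The two ingredients I will verify are $\phi_{*}(\ph^{*}H)=(\ph^{+})^{*}H$ for every $H\in\Nd{Y}$ (because $\phi$ is an isomorphism over the complement of finitely many points of $Y$, so strict transform and pullback from $Y$ commute) and $\phi_{*}[K_X]=[K_{X^+}]$ (canonical classes of $X$ and $X^+$ agree on the common open subset $X\setminus S\iso X^+\setminus S^+$). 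Testing $\npb{\phi}(\gamma^+)$ and $-\gamma$ on these generators: on $D=\ph^{*}H$ both sides vanish because $\gamma$ and $\gamma^+$ lie in $\ph$- and $\ph^+$-fibres; on $D=[K_X]$ the left-hand side equals $\gamma^+\ldot[K_{X^+}]=1$ and the right-hand side equals $-\gamma\ldot[K_X]=1$, both by~\eqref{eq:number_one}. Agreement on a basis establishes~(i).

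For~(ii), first suppose $c^+\subset S^+$. By Remark~\ref{bem:kawamata} each irreducible component of $S^+$ is a single $\ph^+$-fibre, namely a $\pro^{1}$, hence $c^+$ is such a fibre and $[c^+]$ is a positive multiple of $\gamma^+$. Part (i) then shows $\npb{\phi}([c^+])$ is a negative multiple of $\gamma$, which is not effective since $\gamma$ pairs strictly positively with any ample divisor on $X$ while effective classes pair non-negatively with it. Assume now $c^+\not\subset S^+$ and let $c\subset X$ be its strict transform. Dualising the identity $\phi_{*}\circ\ph^{*}=(\ph^{+})^{*}$ just proved gives $\ph_{*}\circ\npb{\phi}=(\ph^{+})_{*}$, and because $\phi$ restricts to an isomorphism between $c\setminus S$ and $c^{+}\setminus S^{+}$ we have $(\ph^{+})_{*}[c^+]=\ph_{*}[c]$, both equal to the class of the common image $\ph(c)=\ph^{+}(c^{+})$ in $Y$. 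Consequently $\npb{\phi}([c^+])-[c]\in\ker\ph_{*}=\R\gamma$, and we write $\npb{\phi}([c^+])=[c]+k\gamma$ for some $k\in\R$.

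To pin down the sign of $k$, I would pass to the common resolution of Remark~\ref{bem:kawamata}: let $\pi:\tilde X\ra X$ be the blow-up along the codimension-two centre $S$ and $\pi^{+}:\tilde X\ra X^+$ the blow-up along the codimension-three centre $S^+$, with shared exceptional divisor $\tilde E$. The standard discrepancy formulas for blow-ups of smooth centres read $\pi^{*}[K_X]=[K_{\tilde X}]-[\tilde E]$ and $(\pi^{+})^{*}[K_{X^+}]=[K_{\tilde X}]-2[\tilde E]$. Pairing $\npb{\phi}([c^+])=[c]+k\gamma$ with $[K_X]$ and invoking $\phi_{*}[K_X]=[K_{X^+}]$ and $\gamma\ldot[K_X]=-1$ gives $k=[c]\ldot[K_X]-[c^+]\ldot[K_{X^+}]$; the projection formula on $\tilde X$ rewrites this as $\tilde c\ldot\bigl(\pi^{*}[K_X]-(\pi^{+})^{*}[K_{X^+}]\bigr)=\tilde c\ldot[\tilde E]$, where $\tilde c\subset\tilde X$ is the strict transform of $c^+$. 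Since $c^+\not\subset S^+$ forces $\tilde c\not\subset\tilde E$, this intersection number is non-negative, with equality precisely when $\tilde c\cap\tilde E=\emptyset$, that is, when $c^+\cap S^+=\emptyset$. I expect the main obstacle to be this last step: the positivity of $k$ is invisible to the numerical decomposition alone and only emerges through the discrepancy jump across the flip encoded in the common resolution.
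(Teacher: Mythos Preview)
Your argument is correct. The paper does not supply a proof beyond the remark that the lemma is ``immediate from the definition'' with a reference to \cite[Chapter~4.2.2]{diss} for details; your approach---testing~(i) against the spanning set $\ph^{*}\Nd{Y}\cup\{[K_X]\}$ of $\Nd{X}$, and for~(ii) extracting the coefficient $k$ as the discrepancy jump $\tilde c\ldot\bigl(\pi^{*}[K_X]-(\pi^{+})^{*}[K_{X^+}]\bigr)=\tilde c\ldot[\tilde E]\geq 0$ on the common resolution of Remark~\ref{bem:kawamata}---is exactly the natural way to unpack those definitions and is presumably what the thesis does.
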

\subsection{The Mori cone of a smooth fourfold}
The following proposition will later serve as an induction step in the proof of the fact that the moving cone of a smooth fourfold is polyhedral. The proof of the proposition relies on Kawamata's results. It essentially uses Lemma~\ref{lem:explicit} and the fact that the exceptional locus of the flipped small contraction is a disjoint union of finitely many rational curves.
\begin{prop}\label{prop:inductionstep}
Let $X$ be a smooth projective fourfold such that $K_X$ fails to be nef and such that $\mor{X}$ is a convex, polyhedral cone in $\Nc{X}$, \[\mor{X}=\langle[p_1],\ldots,[p_k],[n_1],\ldots,[n_m]\rangle_{\R_+}\text{ say,}\] where each $p_i$ is an irreducible curve on $X$ such that $[p_i]\ldot[K_X]\geq0$. Assume that there exists an ample divisor $A$ on $X$ and a real number $\eps>0$ such that
\begin{enumerate}[(i)]
\item every irreducible curve $c\neq p_1,\ldots,p_k$ on $X$ is $(K_X+\eps A)$--negative\label{prop:assumption2} and that
\item each $n_j$ is a rational curve on $X$, in particular $[n_j]\ldot([K_X]+\eps[A])<0$ by (\ref{prop:assumption2}).\label{prop:assumption3}
\end{enumerate}
Moreover, assume that $[n_m]$ is a small extremal class with extremal contraction $\ph:X\ra Y$ and let
\[
  \xymatrix{
      X \ar@{-->}[rr]^{\phi} \ar[rd]_{\ph}  &     &  X^+ \ar[dl]^{\ph^+}  \\
                             &  Y  &
  }
\]
be the flip of $\ph$. Denote by $S:=\textup{Exc}_X(\ph)$ the exceptional set of $\ph$ in $X$ and by $S^+:=\textup{Exc}_{X^+}(\ph^+)$ the exceptional set of $\ph^+$ in $X^+$.

Then $\mor{X^+}$ is a convex, polyhedral cone in $\Nc{X^+}$. For every ample divisor $A^+$ on  $X^+$ there exists an $\eps^+>0$ such that every irreducible curve $c^+\neq p_1^+,\ldots,p_k^+$ which is not contained in $S^+$ is $(K_{X^+}+\eps^+A^+)$--negative, where $p_i^+$ denotes the strict transform of $p_i$ under $\phi$, for $i=1,\ldots,k$.
\end{prop}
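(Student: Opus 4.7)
My plan is to exhibit an explicit finite generating set for $\mor{X^+}$ built from $X$-data, and then to verify the $(K_{X^+} + \eps^+ A^+)$-negativity condition curve by curve.

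First I would assemble the candidate generators: the strict transforms $p_i^+$ of $p_i$ and $n_j^+$ of $n_j$ (for $j < m$), together with the single class $\gamma^+ := -\npf{\phi}([n_m])$. Strict transforms are well-defined here because any curve contracted by $\ph$ has class in the ray $\R_+[n_m]$, whereas the $p_i$ and the $n_j$ with $j<m$ generate distinct extremal rays of $\mor{X}$. By Theorem~\ref{thm:kawamata}, $S^+$ is a finite disjoint union of copies $S_l^+ \iso \pro^1$, and Lemma~\ref{lem:explicit}(i) combined with the fact that all fibre classes of $\ph$ are proportional to $[n_m]$ shows $[S_l^+] \in \R_+\gamma^+$ for each $l$; in particular every class in the candidate cone is effective on $X^+$.

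For the reverse containment, take an irreducible curve $c^+ \subset X^+$. If $c^+ \subset S^+$ then $c^+ = S_l^+$ and $[c^+] \in \R_+\gamma^+$. Otherwise its strict transform $c$ on $X$ is a well-defined irreducible curve not in $S$, and Lemma~\ref{lem:explicit}(ii) gives $\npb{\phi}([c^+]) = [c] + k[n_m]$ with $k \geq 0$. I would expand $[c] = \sum a_i[p_i] + \sum b_j[n_j]$ in $\mor{X}$, apply $\npf{\phi}$ and use the pushforward analog of Lemma~\ref{lem:explicit}(ii) to get $\npf{\phi}([p_i]) = [p_i^+] + c_i\gamma^+$ and $\npf{\phi}([n_j]) = [n_j^+] + d_j\gamma^+$ (for $j < m$, with $c_i, d_j \geq 0$), together with $\npf{\phi}([n_m]) = -\gamma^+$ by construction of $\gamma^+$. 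This expresses $[c^+]$ as a real combination of the candidate generators.

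For the $\eps^+$-condition, Remark~\ref{rmk:negpb} gives $\gamma^+ \cdot K_{X^+} = 1$, but curves generating $\R_+\gamma^+$ lie in $S^+$ and are already excluded. For any irreducible $c^+ \neq p_i^+$ with $c^+ \not\subset S^+$, the identity $[c^+] \cdot K_{X^+} = \npb{\phi}([c^+]) \cdot K_X = [c] \cdot K_X - k$ combined with assumption (i) applied to $c$ (which is distinct from each $p_i$ since $c^+ \neq p_i^+$) gives the $X$-side estimate $[c] \cdot (K_X + \eps A) < 0$. Comparing $\eps A$ with $\eps^+ \npb{\phi}(A^+)$ on $\Nd{X}$ (which differ by a small divisor as $\eps^+ \to 0$) and transporting the inequality through the flip then yields the desired $(K_{X^+} + \eps^+ A^+)$-negativity for $\eps^+$ sufficiently small.

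The hard part will be establishing the sign of the coefficient of $\gamma^+$ in the expansion of $[c^+]$: direct computation gives $\sum a_i c_i + \sum_{j<m} b_j d_j - b_m - k$, which is not manifestly non-negative when $[c]$ has a large $[n_m]$-component or when $c$ meets $S$ substantially. The resolution should exploit the freedom in expanding $[c]$ (minimizing $b_m$ via any relations among the $[p_i], [n_j]$), together with the Fano-type boundedness of $b_m$ and $k$ for rational curves supplied by assumption (i); this should either validate the generating set above or produce at most finitely many additional extremal rays to adjoin, preserving polyhedrality.
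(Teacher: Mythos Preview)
Your explicit-generator approach to polyhedrality has a genuine gap that you yourself identify but do not close. The coefficient of $\gamma^+$ in the expansion of $[c^+]$ is $\sum a_i c_i + \sum_{j<m} b_j d_j - b_m - k$, and there is no reason this should be non-negative: under the isomorphism $\npf{\phi}$ the cones $\mor{X}$ and $\mor{X^+}$ sit in the same vector space with $[n_m]$ and $\gamma^+$ pointing in \emph{opposite} directions, so $\mor{X^+}$ can (and typically does) acquire extremal rays that are not in the cone spanned by $[p_i^+],[n_j^+],\gamma^+$. The paper even remarks explicitly (Remark~\ref{rmk:inductionstep}) that strict transforms of extremal curves of $X$ need not span extremal rays of $\mor{X^+}$. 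Your fallback, ``at most finitely many additional extremal rays,'' is exactly the content of the proposition and cannot be assumed; neither the freedom in choosing the expansion of $[c]$ nor any ``Fano-type boundedness'' from assumption~(i) yields a bound on how many new rays appear.

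The paper avoids this entirely by reversing the order of your two steps. It first proves the $\eps^+$-statement with a \emph{uniform} $\eps^+$: since $\mor{X}$ is polyhedral, one sets
\[
\eps' := \min_{i,j<m}\Bigl\{\tfrac{\eps\,[A]\cdot[p_i]}{\phi^*[A^+]\cdot[p_i]},\ \tfrac{\eps\,[A]\cdot[n_j]}{\phi^*[A^+]\cdot[n_j]}\Bigr\},
\]
which forces $(K_X+\eps'\phi^*A^+)\cdot\eta \le (K_X+\eps A)\cdot\eta$ for every $\eta\in\mor{X}$, and then takes $\eps^+=\eps'/2$ to turn the non-strict limit inequality into a strict one. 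Your curve-by-curve comparison (``differ by a small divisor as $\eps^+\to 0$'') does not give this uniformity without exactly this trick. Once the $\eps^+$-statement is in hand, polyhedrality is immediate: any extremal ray of $\mor{X^+}$ other than $\R_+[p_i^+]$ or $\R_+\gamma^+$ lies in $\nic{X^+}$, hence is $(K_{X^+}+\eps^+A^+)$-negative, and Mori's Cone Theorem says there are only finitely many such rays. So the route is: uniform negativity $\Rightarrow$ Cone Theorem $\Rightarrow$ polyhedrality, not the reverse.
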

\emph{Strategy for the proof.} Since $\mor{X^+}$ contains no lines, it is the span of its extremal rays.
Therefore, to show that $\mor{X^+}$ is polyhedral, it is sufficient to show that $\mor{X^+}$ has only finitely many extremal rays.
There exists a decomposition \[\mor{X^+}=\nic{X^+}+\R_+[n_m^+]+\R_+[p_1^+]+\dots+\R_+[p_k^+],\] where $\nic{X^+}$ is a certain subcone of $\mor{X^+}$ that we will define later.
Any extremal ray $R$ of $\mor{X}$ which is not equal to $\R_+[n_m^+]$ or $\R_+[p_i^+]$, $i=1,\ldots,k$, is then an extremal ray of $\nic{X^+}$.
We will then show that for every ample divisor $A^+$ on $X^+$ there exists an $\eps^+\in\R_{>0}$ such that $\nic{X^+}\setminus\{0\}$ is entirely $[K_{X^+}+\eps^+A^+]$--negative.
Therefore, every extremal ray $R$ of $\mor{X}$ which is not equal to $\R_+[n_m^+]$ or $\R_+[p_1^+],\ldots,\R_+[p_k^+]$ is $[K_{X^+}+\eps^+A^+]$--negative. This will conclude the proof since Mori's Cone Theorem says that $\mor{X^+}$ has only finitely many $[K_{X^+}+\eps^+A^+]$--negative extremal rays.
\begin{proof}
First of all, note that the existence of the map $\phi$ is guaranteed by Corollary~\ref{cor:kawamata}, and note that $X^+$ is a smooth fourfold. Moreover, we know that $S$ is the disjoint union of its irreducible components, which are isomorphic to $\pro^2$, and that $S^+$ is a disjoint union of smooth rational curves.

Without loss of generality, we may assume that $S$ and $S^+$ are irreducible. For conformity, we will denote the irreducible curve $S^+$ by $n_m^+$. Furthermore, we can assume that \[\npb{\phi}([n_m^+])=-[n_m]\] by Lemma~\ref{lem:explicit}. Recall that for every irreducible curve $r^+\neq n_m^+$ on $X^+$ the class $\npb{\phi}([r^+])$ is effective and not zero by Lemma~\ref{lem:explicit}. 
Let $p_i^+$ be the strict transform of $p_i$ under $\phi$ for all $i=1,\ldots,k$, define $P:=\{n_m^+,p_1^+,\ldots,p_k^+\}$,
\[\nio{X^+}:=\bigl\{\sum\limits_{i=1}^sa_i[c_i^+]\mid a_i\in\R_+, c_i^+\notin P\text{ is an irreducible curve on }X^+\bigr\}.\]
and denote by $\nic{X^+}$ the closure of $\nio{X^+}$ in $\Nc{X^+}$. A straightforward computation shows that
\[\mor{X^+}=\nic{X^+}+\R_+[n_m^+]+\R_+[p_1^+]+\dots+\R_+[p_k^+].\]
By Assumption~\eqref{prop:assumption2}, we know that all irreducible curves on $X$ except $p_1,\ldots,p_k$ are $(K_X+\eps A)$--negative for a certain ample divisor $A$ on $X$ and a certain real number $\eps>0$. 

Now let $A^+$ be an arbitrary ample divisor on $X^+$ and set 
\[\eps':=\min\left\{\tfrac{\eps[A]\ldot[p_i]}{\phi^*([A^+])\ldot[p_i]},\tfrac{\eps[A]\ldot[n_j]}{\phi^*([A^+])\ldot[n_j]}\mid i=1,\ldots,k,\ j=1,\ldots,m-1\right\}.\]
We have to check that $\eps'$ is well-defined. Since none of the $p_i$, $1\leq i\leq k$, or $n_j$, $1\leq j < m$, are contained in the exceptional locus $S$ of $\ph$, all the classes $\npf{\phi}([p_i])$ and $\npf{\phi}([n_j])$ are effective by Lemma~\ref{lem:explicit}. Thus $0<[A^+]\ldot\npf{\phi}([p_i])=\phi^*([A^+])\ldot[p_i]$ and $0<[A^+]\ldot\npf{\phi}([n_j])=\phi^*([A^+])\ldot[n_j]$ for all $i=1,\ldots,k$ and $j=1,\ldots,m-1$.
In particular, $\eps'>0$ by the previous consideration. With this definition, Assumption~\eqref{prop:assumption2} yields that 
\begin{equation}\label{eq:negative3}
\bigl([K_X]+\eps'\phi^*([A^+])\bigr)\ldot\eta\leq([K_X]+\eps[A])\ldot\eta\text{ for all }\eta\in\mor{X}.
\end{equation}
Note that it is sufficient to check inequality \eqref{eq:negative3} for the classes $[p_i]$ and $[n_j]$ since $\mor{X}=\langle[p_1],\ldots,[p_k],[n_1],\ldots,[n_m]\rangle_{\R_+}$.
Now set \[\eps^+:=\tfrac{\eps'}{2}.\]

\emph{Claim.} Every class $\gamma\in\nic{X^+}\setminus\{0\}$ is $([K_{X^+}]+\eps^+[A^+])$--negative.

\emph{Proof of the Claim.} Let $0\neq\gamma\in\nic{X^+}$. By definition of $\nic{X^+}$, there exists a sequence of effective cycles $(\gamma_l)_{l\in\N}\subset\nio{X^+}$ such that $\gamma_l\xra{l\to\infty}\gamma$. Let $l\in\N$ be an arbitrary integer. The class $\gamma_l$ is given by a finite sum \[\gamma_l=\sum_{i=1}^sa_i[c^+_i],\] where $a_i\geq 0$ and $c^+_i$ is an irreducible curve on $X^+$ which is not contained in the set $P=\{n_m^+,p_1^+,\ldots,p_k^+\}$ for all $i=1,\ldots,s$. Lemma~\ref{lem:explicit} yields that $\npb{\phi}([c^+_i])=[c_i]+k_i[n_m]$, where $c_i$ denotes the strict transform of $c_i^+$ and $k_i$ is a suitable non-negative integer, for all $i=1,\ldots,s$. 

Note that each $c_i\neq p_j$, for all $j=1,\ldots,k$ since none of the curves $c_i^+$ is equal to one of the $p_j^+$, $j=1,\ldots,k$.

Therefore, $([K_X]+\eps[A])\ldot([c_i]+k_i[n_m])<0$ for all $i=1,\ldots,s$ and \begin{equation}\label{eq:negative4}\npb{\phi}(\gamma_l)\text{ is an effective }([K_X]+\eps[A])\text{--negative class.}\end{equation}
Thus
\begin{align*}
([K_{X^+}]+\eps'[A^+])\ldot\gamma_l&=\phi^*([K_{X^+}]+\eps'[A^+])\ldot\npb{\phi}(\gamma_l)&&\text{ }\\
&=([K_X]+\eps'\phi^*([A^+]))\ldot\npb{\phi}(\gamma_l)&&\text{ }\\
&\leq([K_X]+\eps[A])\ldot\npb{\phi}(\gamma_l)<0&&\text{ by }\eqref{eq:negative3}\text{ and }\eqref{eq:negative4}
\end{align*}

This yields that $([K_{X^+}]+\eps'[A^+])\ldot\gamma=\lim\limits_{l\to\infty}([K_{X^+}]+\eps'[A^+])\ldot\gamma_l\leq0$.
Since $\gamma\in\nic{X^+}\subset\mor{X^+}$ and $A^+$ is ample, we obtain that

\hspace*{1cm}$[K_{X^+}]+\eps^+[A^+])\ldot\gamma=\underbrace{([K_{X^+}]+\eps'[A^+])\ldot\gamma}_{\leq0}-\underbrace{\tfrac{\eps'}{2}[A^+]\ldot\gamma}_{>0}<0$.\hfill$\square_{\text{Claim}}$

By definition of the cone $\nic{X^+}$, this yields that every irreducible curve $c^+\neq p_1^+,\ldots,p_k^+$ which is not contained in $S^+$ is $(K_{X^+}+\eps^+A^+)$--negative.

Now let $R:=\R_+\nu$ be an extremal ray of $\mor{X^+}$ such that $\nu$ is not numerically proportional to $[p_1^+],\ldots,[p_k^+]$ or $[n_m^+]$. The existence of such an extremal ray is guaranteed by the fact that $\mor{X^+}_{K_{X^+}<0}\neq\emptyset$. The given decomposition of $\mor{X^+}$ yields that $R\subset\nic{X^+}$ and thus $([K_{X^+}]+\eps^+[A^+])$--negative. 

Since there are only finitely many $([K_{X^+}]+\eps^+[A^+])$--negative extremal rays in $\mor{X^+}$ by Mori's Cone Theorem, this shows that $\mor{X^+}$ has only finitely many extremal rays and concludes the proof.
\end{proof}
\begin{bem}\label{rmk:inductionstep}
The proof of Proposition~\ref{prop:inductionstep} shows that \[\mor{X^+}=\nic{X^+}+\R_+[n_m^+]+\R_+[p_1^+]+\dots+\R_+[p_k^+],\] where $\nic{X^+}\subset\mor{X^+}_{K_{X^+}<0}$, $n_m^+$ is a curve in the exceptional locus of the flipped small contraction and the $p_i^+$ are the strict transforms of the $K_X$--non-negative curves $p_i$, $i=1,\ldots,k$, under the flip $\phi$. Moreover, every irreducible curve which is not numerically proportional to $p_1^+,\ldots,p_k^+$ or $n_m^+$, is contained in $\nic{X^+}$.
Note that it is not true that the strict transforms of curves which span extremal rays of $\mor{X}$ will always span extremal rays of $\mor{X^+}$. Even if they do, the types of extremal contractions can change.
\end{bem}
\begin{defn}
A smooth projective fourfold which satisfies the requirements of Proposition~\ref{prop:inductionstep} is called a \emph{\polycone}. Let $X_0$ be a \polycone. A finite sequence
\[X_0\stackrel{\phi_1}{\bir}X_1\stackrel{\phi_2}{\bir}\cdots\stackrel{\phi_n}{\bir}X_n\]
of birational maps is called a \emph{\pmc\ for} $X_0$ if
\begin{enumerate}[(i)]
\item the map $\phi_i:X_{i-1}\bir X_i$ is the flip of a small contraction which contracts a $K_{X_{i-1}}$--negative extremal ray $\R_+[s_{i-1}]$, for all $i=1,\ldots,n$, and
\item the Mori cone $\mor{X_n}$ of the fourfold $X_n$ has no $K_{X_n}$--negative small extremal rays.
\end{enumerate}
We will call a \pmc\ for $X_0$ a \emph{\pmc\ for} $\R_+[s_0]$ if the first map $\phi_1:X_0\bir X_1$ in the sequence is the flip of the small contraction which contracts the $K_{X_0}$--negative extremal ray $\R_+[s_0]$.

The number of flips in a \pmc\ is called the \emph{length of the \pmc}.
\end{defn}
\begin{bem}
Note the following two apparent statements.
\begin{enumerate}[(i)]
\item Each variety $X_i$, for $i=0,\ldots,n-1$, in such a \pmc\ is a \polycone\ by Proposition~\ref{prop:inductionstep}. In particular, $\mor{X_n}$ is polyhedral.
\item If $X$ is a smooth Fano fourfold such that $\mor{X}$ has a small extremal ray, then $X$ is a \polycone. This is obvious, as we can take $A=-K_X$ and $\eps=\tfrac{1}{2}$, for example.
\end{enumerate}
\end{bem}
We will now show that \pmc s exist and that there are just finitely many \pmc s for each \polycone. This is an immediate consequence of the following theorem due to Y. Kawamata, K. Matsuda and K. Matsuki.
\begin{thm}[See \protect{\cite[Theorem~5-1-15]{kmm}}]\label{thm:termination}
There is no infinite sequence of flips for threefolds and fourfolds.\qed
\end{thm}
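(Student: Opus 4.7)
The plan is to follow the difficulty-function approach originating with Shokurov. For a $\Q$-factorial terminal projective variety $Y$ of dimension at most four, I would define
\[d(Y):=\#\bigl\{E : E\text{ is a prime exceptional divisor over }Y\text{ with discrepancy }a(E,Y)<1\bigr\}.\]
The first preliminary step is to verify that $d(Y)$ is a finite non-negative integer. Since $Y$ is terminal, $a(E,Y)>0$ for every such $E$, and in dimensions three and four the classification of terminal singularities (or, more efficiently, the existence of a common log resolution together with the discreteness of discrepancies along any fixed resolution) implies that only finitely many exceptional divisors can have discrepancy strictly less than $1$.

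The key step is to show strict monotonicity: for any flip $\phi:X_i\bir X_{i+1}$ of a small contraction $\varphi_i:X_i\ra Z_i$ with flipped contraction $\varphi_i^+:X_{i+1}\ra Z_i$, one has $d(X_i)>d(X_{i+1})$. I would take a common log resolution $W\ra X_i$, $W\ra X_{i+1}$ dominating $Z_i$, write $K_W$ as the pullback of $K_{X_i}$ plus a divisor supported on the exceptional locus, and apply the negativity lemma to the difference of the two pullbacks of $K_{X_i}$ and $K_{X_{i+1}}$. This gives $a(E,X_i)\le a(E,X_{i+1})$ for every prime divisor $E$ over $Z_i$, with strict inequality precisely when the center of $E$ meets the flipping locus on $X_i$. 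Since at least one such $E$ must exist (in fact there are infinitely many, but one with discrepancy $<1$ suffices), the count of divisors with discrepancy $<1$ strictly drops.

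Combining these two ingredients, any \pmc\ produces a strictly decreasing sequence $d(X_0)>d(X_1)>\cdots$ of non-negative integers, which must be finite, yielding termination.

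The hardest point will be the fourfold case of the finiteness and monotonicity of $d$. In dimension three, Shokurov's original argument goes through because terminal threefold singularities admit particularly well-understood resolutions and the set of discrepancies in $[0,1)$ is easy to control. In dimension four, the behaviour of discrepancies under small contractions is much more delicate; the Kawamata--Matsuda--Matsuki proof requires a careful analysis of the divisors over the flipping locus, together with boundedness estimates for discrepancies on terminal fourfold singularities, in order to guarantee both that $d(X)$ is finite and that the strict decrease under a flip is not lost in passing to the limit of infinite extraction of divisors. This is the technical core of \cite[Theorem~5-1-15]{kmm}, and I would rely on their discrepancy estimates rather than attempt to reprove them from scratch.
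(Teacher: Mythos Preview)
The paper does not prove this theorem at all: it is quoted from \cite[Theorem~5-1-15]{kmm} with an immediate \qed\ and used as a black box. So there is no ``paper's own proof'' to compare against; your sketch is already far more detailed than anything the paper offers.

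Your outline is the standard Shokurov difficulty argument, and for threefolds it is correct as written: terminal threefold singularities always admit exceptional divisors of discrepancy strictly below $1$, smooth threefolds admit no small extremal contractions, and so the strict drop $d(X_i)>d(X_{i+1})$ genuinely terminates the sequence.

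For fourfolds, however, there is a real gap in the step you flag as ``key'': the assertion that ``at least one such $E$ must exist \ldots\ with discrepancy $<1$'' over the flipping locus is false in exactly the situation this paper cares about. If $X_i$ is smooth then every exceptional discrepancy satisfies $a(E,X_i)\geq 1$, so your $d(X_i)=0$ identically. By Kawamata's Corollary~\ref{cor:kawamata} the flip of a smooth fourfold is again smooth, so an entire infinite sequence of smooth fourfold flips would have $d\equiv 0$ and your invariant never moves. Concretely, in the flip described in Remark~\ref{bem:kawamata} the exceptional divisor of the blow-up of $S_i\cong\pro^2$ has discrepancy exactly $1$ on $X_i$ and $2$ on $X_{i+1}$; nothing sits strictly below $1$. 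Your final paragraph acknowledges that dimension four is ``more delicate'' and defers to \cite{kmm}, but the specific difficulty you wrote down cannot be rescued by sharper discrepancy estimates --- it is simply the wrong invariant for the smooth case. The actual \cite{kmm} argument uses a refined count that also tracks divisors at higher discrepancy thresholds (and the integrality/index behaviour of terminal fourfold discrepancies), so that the invariant still drops across a smooth flip. If you want your sketch to stand on its own, you need to replace $d$ by such a refinement and explain why it is finite and strictly decreasing even when both sides of the flip are nonsingular.
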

Thanks to this result we are able to prove the afore-noted statement.
\begin{lemma}\label{lem:existence}
Let $X_0$ be a \polycone. Then there exists a \pmc\ for every small extremal ray of $\mor{X_0}$. Moreover, there exist only finitely many \pmc s for $X_0$.
\end{lemma}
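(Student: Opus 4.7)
The plan is to prove existence by iteratively flipping $K$-negative small extremal rays and appealing to the termination theorem, and to prove finiteness via a König-lemma argument on the tree of all admissible flip sequences.

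For existence, fix a $K_{X_0}$-negative small extremal ray $\R_+[s_0]$ of $\mor{X_0}$ (this is the only kind that can begin a \pmc\ by definition). Corollary~\ref{cor:kawamata} supplies the flip $\phi_1:X_0\bir X_1$, and Proposition~\ref{prop:inductionstep} together with Remark~\ref{rmk:inductionstep} certifies that $X_1$ is again a \polycone\ whose $K_{X_1}$-non-negative generators are the strict transforms $p_1^+,\ldots,p_k^+$ together with the exceptional curve $n_m^+\subset S^+$ (which is $K_{X_1}$-positive by equation~\eqref{eq:number_one}). If $\mor{X_1}$ has no $K_{X_1}$-negative small extremal ray, the sequence terminates; otherwise I select such a ray and flip again. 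Theorem~\ref{thm:termination} guarantees that this loop halts after finitely many iterations, delivering the required \pmc\ for $\R_+[s_0]$.

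For finiteness, I organise every \pmc\ starting from $X_0$ as a path in a rooted tree $T$: the root is $X_0$, and the children of a node $X_i$ are precisely the flips along $K_{X_i}$-negative small extremal rays of $\mor{X_i}$. Each \pmc\ for $X_0$ is then a maximal root-to-leaf path in $T$. The out-degree at every node is finite because $\mor{X_i}$ is polyhedral (by Proposition~\ref{prop:inductionstep}) and thus has only finitely many extremal rays. By Theorem~\ref{thm:termination}, no branch of $T$ is infinite. A finitely branching rooted tree with no infinite branch is finite by König's lemma, so $T$ has only finitely many leaves, which bounds the number of \pmc s for $X_0$.

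The principal obstacle is ensuring that the \polycone\ hypothesis actually propagates under each flip; without this, Corollary~\ref{cor:kawamata} cannot be re-applied at the next stage and the recursion stalls. This is, however, precisely the content of Proposition~\ref{prop:inductionstep} and Remark~\ref{rmk:inductionstep}, so once those ingredients are granted the proof reduces to the combinatorial termination-plus-König argument sketched above.
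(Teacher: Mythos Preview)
Your argument is correct and matches the paper's approach. For finiteness the paper phrases the same idea as a direct pigeonhole contradiction---if there were infinitely many \pmc s, then by finite branching at each stage one could successively choose a flip through which infinitely many of them factor, producing an infinite flip sequence and contradicting Theorem~\ref{thm:termination}---which is precisely the proof of K\"onig's lemma that you invoke by name.
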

\begin{proof}
The existence of a \pmc\ for every small contraction is an immediate consequence of Corollary~\ref{cor:kawamata}, Proposition~\ref{prop:inductionstep} and Theorem~\ref{thm:termination}.

We will prove the second statement of the lemma by contradiction. Suppose there are infinitely many \pmc s for $X_0$. Since $\mor{X_0}$ has only finitely many extremal rays, there has to be a small extremal ray $\R_+[s_0]$ with flip $\phi_1:X_0\bir X_1$ such that infinitely many \pmc s start with $\phi_1$. Proposition~\ref{prop:inductionstep} yields that $\mor{X_1}$ is polyhedral, too. Therefore, there has to be a small extremal ray $\R_+[s_1]$ of $\mor{X_1}$ with flip $\phi_2:X_1\bir X_2$ such that infinitely many \pmc s start with the map $\phi_1\circ\phi_2$. In this manner we can construct an infinite sequence of flips successively which, however, is a contradiction to Theorem~\ref{thm:termination}.
\end{proof}
\begin{constr}
Let $X$ be a \polycone\ and let $\R_+[s]$ be a small extremal ray of $\mor{X}$. Let $X_k$ be a smooth projective fourfold which appears in a \pmc\ for $\R_+[s]$ and denote by $\Phi_k:X\bir X_k$ the induced birational map. The Mori cone of $X_k$ is polyhedral,  \[\mor{X_k}=\{\alpha\in\Nc{X_k}\mid \alpha\ldot[N_i]\geq0,i=1,\ldots,m\}\text{ say,}\] where $N_1,\ldots,N_m$ are nef divisors on $X_k$ which span $\nef{X_k}$.

We define \[\eq{X_k}_{nef}:=\{(\Phi_k)^*([N_i])\mid i=1\ldots,m\}\] as the set of pullbacks via the map $\Phi_k$ of nef divisors on $X_k$ which span $\nef{X_k}$. 
We set \[\eq{X_k}_{div}:=\{(\Phi_k)^*([E_i])\mid i=1,\ldots,l\},\] where $E_1,\ldots,E_l$ are the exceptional divisors which correspond to  the divisorial extremal rays of $\mor{X_k}$.

Furthermore, let $\textup{Poly}(\R_+[s])$ be the set of varieties which appear in a \pmc\ for $\R_+[s]$. 

We call the set \[\eq{\R_+[s]}:=\bigcup\limits_{X_k\in\textup{Poly}(\R_+[s])}(\eq{X_k}_{nef}\cup\eq{X_k}_{div})\] the \emph{set of equations for} $\R_+[s]$.
\end{constr}
\begin{bem}\label{rmk:finite}
Note that the set $\eq{\R_+[s]}$ is a finite set of classes of divisors on $X$ for every small extremal ray of $\mor{X}$.
\end{bem}
\section{Proof of the main theorem}
We will now give the definition of the set $\eq{X}$, which was introduced in the statement of the main theorem of this paper.
\begin{defn}
If $X$ is a smooth Fano fourfold, then set 
\[\eq{X}:=\big(\bigcup\limits_{i=1}^k\eq{\R_+[s_i]}\big)\cup\eq{X}_{nef}\cup\eq{X}_{div},\]
where $\R_+[s_1],\ldots,\R_+[s_k]$ are the small extremal rays of $\mor{X}$. We call $\eq{X}$ the \emph{set of equations for} $X$.
\end{defn}

Note that $\eq{X}$ is a finite set of classes of divisors by Remark~\ref{rmk:finite}.
\subsection{Proof of Theorem~\ref{thm:mainthm}}
Let $X$ be a smooth Fano fourfold. The inclusion \[\mov{X}\subseteq\{\gamma\in\Nc{X}\mid \gamma\ldot\Delta\geq0\text{ for all }\Delta\in\eq{X}\}=:M\] follows from the fact that the numerical pushforward of a movable class by a flip is again a movable class. Let us prove that $\mov{X}\supseteq M$.

Let $\gamma\in\Nc{X}$ such that $\gamma\ldot\Delta\geq0$ for all $\Delta\in\eq{X}$ and let $D$ be an arbitrary irreducible divisor on $X$. We need to show that $\gamma\ldot[D]\geq0$ by Theorem~\ref{thm:eq}. If $[D]$ is contained in the set $\langle\eq{X}\rangle_{\R_+}$ of effective linear combinations of classes in $\eq{X}$, then there is nothing to show. Thus we can assume that $[D]$ is not contained in $\langle\eq{X}\rangle_{\R_+}$.

The inclusion $\eq{X}_{nef}\subset\eq{X}$ yields that $\gamma\in\mor{X}$. Since $X$ is Fano, we know that \[\mor{X}=\langle[f_1],\ldots,[f_m],[d_1],\ldots,[d_n],[s_1],\ldots,[s_k]\rangle_{\R_+},\]
where $f_i,d_j,s_l$ are rational curves on $X$, $\R_+[f_i]$ is an extremal ray of fibre type, $\R_+[d_j]$ is a divisorial extremal ray and $\R_+[s_l]$ is a small extremal ray, for all $i=1,\ldots,m$, $j=1,\ldots,n$ and $l=1,\ldots,k$.

Thus it is sufficient to show that $[f_i]\ldot[D]\geq0$, $[d_j]\ldot[D]\geq0$ and $[s_l]\ldot[D]\geq0$, for all $i=1,\ldots,m$, $j=1,\ldots,n$ and $l=1,\ldots,k$.

We choose arbitrary indices $1\leq i\leq m$, $1\leq j\leq n$ and $1\leq l\leq k$. 

By assumption, $\R_+[f_i]$ is an extremal ray of fibre type. Therefore, clearly $[f_i]\ldot[D]\geq0$ since $D$ cannot contain every fibre of the corresponding contraction.

Let $E_j$ denote the exceptional divisor of the extremal contraction corresponding to $\R_+[d_j]$. Since $[D]\notin\eq{X}_{div}$, we have $[D]\neq[E_j]$. Therefore, we can find a curve $c\subset E_j$ such that $c\nprop d_j$ and $c\nsubseteq D$. Hence $[d_j]\ldot[D]\geq0$.

The last inequality $[s_l]\ldot[D]\geq0$ is shown in the following Proposition~\ref{prop:positive}.
\qed
\begin{prop}\label{prop:positive}
Let $X_0$ be a smooth Fano fourfold and let $\R_+[s_0]$ be a small extremal ray of $\mor{X_0}$. If $D$ is an irreducible divisor on $X_0$ such that $[D]$ is not contained in the closed cone $\langle\eq{\R_+[s_0]}\rangle_{\R_+}$ spanned by classes in $\eq{\R_+[s_0]}$, then $[D]\ldot[s_0]\geq0$.
\end{prop}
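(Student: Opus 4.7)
The plan is to proceed by induction on the maximum length $\ell$ of a pmc-flip sequence for $\R_+[s_0]$ starting from $X_0$, which is finite by Lemma~\ref{lem:existence}, and to translate the sign of $[D]\cdot[s_0]$ on $X_0$ into a nefness question for the strict transform of $D$ on $X_1$.

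\emph{Reduction to $X_1$.} Let $\phi:X_0\bir X_1$ denote the flip of $\R_+[s_0]$ and set $D_1:=\phi_*[D]$. Because $\phi$ is an isomorphism in codimension one, $\phi^*[D_1]=[D]$. The duality between $\phi_*$ and $\npb{\phi}$ combined with Lemma~\ref{lem:explicit}(i) yields \[[D_1]\cdot[n_m^+]=[D]\cdot\npb{\phi}([n_m^+])=-[D]\cdot[s_0],\] so it is enough to show that $[D_1]\cdot[n_m^+]\leq 0$. I argue by contradiction and suppose $[D_1]\cdot[n_m^+]>0$.

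\emph{Reduction to nefness.} I claim that, under this assumption, $D_1$ must be nef on $X_1$. Once this is proved, the classes defining $\eq{X_1}_{nef}$ span $\nef{X_1}$, so that $[D_1]\in\langle\eq{X_1}_{nef}\rangle_{\R_+}$; applying $\phi^*$ then places $[D]\in\langle\eq{X_1}_{nef}\rangle_{\R_+}\subseteq\langle\eq{\R_+[s_0]}\rangle_{\R_+}$, contradicting the hypothesis on $[D]$.

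\emph{Verifying nefness.} By Remark~\ref{rmk:inductionstep}, the polyhedral cone $\mor{X_1}$ decomposes as $\nic{X_1}+\R_+[n_m^+]$, using that $X_0$ is Fano so the list $P$ of $K$-non-negative curves is empty on $X_0$. I verify $[D_1]\cdot R\geq 0$ for every extremal ray $R$ of $\mor{X_1}$. Fibre-type rays are non-negative automatically. For a divisorial extremal ray with exceptional divisor $E$, either $D_1\neq E$ gives $[D_1]\cdot R\geq 0$, or $D_1=E$ and then $[D]=\phi^*[E]\in\eq{X_1}_{div}\subseteq\eq{\R_+[s_0]}$, an immediate contradiction. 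The unique $K_{X_1}$-non-negative small extremal ray is $\R_+[n_m^+]$, where positivity holds by the contradiction hypothesis. Finally, for a $K_{X_1}$-negative small extremal ray $\R_+[t]$, the pmc-flip sequences for $\R_+[t]$ from $X_1$ have length at most $\ell-1$, so the induction hypothesis gives $[D_1]\cdot[t]\geq 0$; the alternative $[D_1]\in\langle\eq{\R_+[t]}\rangle_{\R_+}$ on $X_1$ would pull back via $\phi^*$ to put $[D]$ in $\langle\eq{\R_+[s_0]}\rangle_{\R_+}$, again a contradiction.

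\emph{Main obstacle.} The induction must be phrased for arbitrary \polycone{}s, not only Fano fourfolds, since the intermediate varieties appearing in pmc-flip sequences are in general not Fano. At deeper stages the set $P_{X_k}$ of $K_{X_k}$-non-negative curves accumulates the strict transforms of small curves produced by earlier flips, which can give rise to further $K_{X_k}$-non-negative small extremal rays beyond the current $n_m^{(k)}$. Ensuring $[D_k]\cdot[q]\geq 0$ on each such ray $R=\R_+[q]$ requires careful bookkeeping of the non-negative multiplicities appearing in the numerical pullback decompositions of Lemma~\ref{lem:explicit}(ii) across successive flips, and is the main technical hurdle that must be resolved for the induction to close.
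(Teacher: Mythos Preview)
Your outline has the right overall shape, and you have correctly flagged the place where it breaks down; but that place is not a technicality---it is the entire content of the proposition, and you have not supplied it.

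Concretely: already at the first inductive step you want to apply the induction hypothesis to the pair $(X_1,\R_+[t])$, but $X_1$ is not Fano, so the statement you are inducting on does not apply. You then observe that the statement should be reformulated for arbitrary \polycone{}s. Once you do that, the decomposition of $\mor{X_1}$ acquires the extra summands $\R_+[p_i^+]$, and to conclude nefness of $D_1$ you must show $[D_1]\cdot[p_i^+]\geq0$ for every one of these accumulated $K$-non-negative curves. You stop here, saying this ``requires careful bookkeeping''. In fact no amount of formal bookkeeping with Lemma~\ref{lem:explicit}(ii) alone will do: the nonnegative coefficient $k_i$ in $\npb{\phi}([c^+])=[c]+k_i[s]$ goes the wrong way for your purposes unless you know it is \emph{zero}, i.e.\ unless the curve $p_i^+$ is actually disjoint from the exceptional locus of the next flip.

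This disjointness is exactly what the paper proves (its Claims~\ref{cl:l2} and~\ref{cl:positive}), and it is the heart of the argument. The mechanism is numerical: Equation~\eqref{eq:number_one} gives $[K_{X_n}]\cdot[s_n]=-1$ and, inductively, $[K_{X_n}]\cdot[s_i^n]=1$ for $i<n$. Pulling back a line $s_n\subset S_n$ via the previous flip and computing with $K$ then forces the multiplicity $l$ in $\npb{(\phi_n)}([s_n])=[f_n]+l[s_{n-1}]$ to vanish, because otherwise $[K_{X_{n-1}}]\cdot[f_n]\geq0$ would make $f_n$ one of the earlier $s_i^{n-1}$, contradicting $[K_{X_n}]\cdot[s_n]=-1$. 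The Fano hypothesis on $X_0$ is what starts this chain of equalities $[K]\cdot[s_i]=1$; without it, even the base case of your proposed induction for \polycone{}s fails for exactly this reason.

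Once disjointness is established, your inductive framework and the paper's direct construction of an infinite flip sequence are essentially equivalent repackagings of the same argument: your ``$\ell$ decreases'' is dual to the paper's ``the sequence never terminates''. But the substantive step---controlling the $K$-non-negative curves via the intersection calculation above---is missing from your write-up and must be supplied before the proof is complete.
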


Since the proof of Proposition~\ref{prop:positive} is rather long,
we have subdivided it into a number of steps for the reader's convenience.

\subsection{Proof of Proposition~\ref{prop:positive}}

\subsection*{Step 1, setup of notation}
Let $D$ be an irreducible divisor on $X_0$ such that $[D]$ is not contained in the closed cone $\langle\eq{\R_+[s_0]}\rangle_{\R_+}$ spanned by classes in $\eq{\R_+[s_0]}$. Furthermore, let 
\[
  \xymatrix{
      X_0 \ar@{-->}[rr]^{\phi_1} \ar[rd]_{\ph_0}  &     &  X_1 \ar[dl]^{\ph^+_0}  \\
                             &  Y_0  &
  }
\]
be the flip diagram for $\R_+[s_0]$. Following the notation introduced in Figure~\ref{fig:xx}, let $[s_0^1]$ be the class of an irreducible curve $s_0^1$ in a fibre of $\ph_0^+$ and let $D_1$ be the strict transform of $D$ under $\phi_1$. Note that $-s_0\ldot K_{X_0}=1=s_0^1\ldot K_{X_1}$ by Equation~\eqref{eq:number_one} of page~\pageref{eq:number_one}. We may assume that the exceptional locus of $\ph_0$ is irreducible. Then Remark~\ref{rmk:inductionstep} yields that \[\mor{X_1}=\nic{X_1}+\R_+[s_0^1],\] where $\nic{X_1}$ is $K_{X_1}$--negative, and that $s_0^1$ is the only $K_{X_1}$--non-negative curve on $X_1$. 

We will prove Proposition~\ref{prop:positive} by contradiction. Assume that $[D]\ldot[s_0]<0$. 
We know that $[D_1]=(\phi_1)_*([D])$ and Lemma~\ref{lem:explicit} gives $(\phi_1)_*([D])\ldot[s_0^1]>0$.

We will now construct an infinite sequence of flips successively, which is
impossible by Theorem~\ref{thm:termination}, and therefore a
contradiction. Throughout the proof of
  Proposition~\ref{prop:positive}, we use the notation outlined in
  Figure~\ref{fig:xx}.
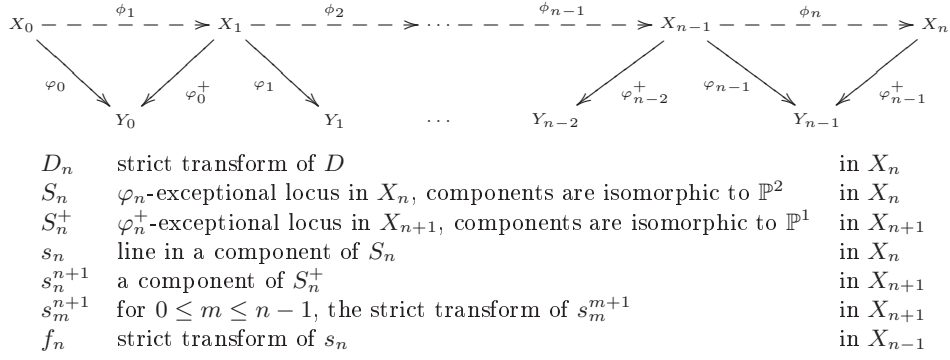
\begin{figure}
  \centering
  \begin{tiny}
    \[
    \xymatrix{
      X_0 \ar@{-->}[rr]^{\phi_1} \ar[rd]_{\ph_0}  &     &  X_1 \ar@{-->}[rr]^{\phi_2} \ar[dl]^{\ph^+_0} \ar[dr]_{\ph_1} &  & \ldots  \ar@{-->}[rr]^{\phi_{n-1}} & & X_{n-1}\ar@{-->}[rr]^{\phi_n} \ar[rd]_{\ph_{n-1}} \ar[ld]^{\ph^+_{n-2}} &  & X_n\ar[dl]^{\ph^+_{n-1}} \\
      &  Y_0  & & Y_1 & \cdots & Y_{n-2} & & Y_{n-1}
    }
    \]
  \end{tiny}
  \begin{small}
  	\begin{tabular}{lll}
    	$D_n$ & strict transform of $D$ & in $X_n$\\
    	$S_n$ & $\varphi_n$-exceptional locus in $X_n$, components are isomorphic to $\mathbb P^2$ & in $X_n$\\
    	$S^+_n$ & $\varphi^+_n$-exceptional locus in $X_{n+1}$, components are isomorphic to $\mathbb P^1$ & in $X_{n+1}$\\
    	$s_n$ & line in a component of $S_n$ & in $X_n$\\
    	$s_n^{n+1}$ & a component of $S^+_n$ & in $X_{n+1}$\\
    	$s_m^{n+1}$ & for $0\leq m\leq n-1$, the strict transform of $s_m^{m+1}$ & in  $X_{n+1}$\\
    	$f_n$ & strict transform of $s_n$ & in $X_{n-1}$
  	\end{tabular}
  \end{small}
  \caption{Notation for sequences of flips}
  \label{fig:xx}
\end{figure}

\subsection*{Step 2, finding a flip sequence of length two}

By Lemma~\ref{lem:existence} there exists a \pmc\ for $\R_+[s_0]$ and, of course, $X_1$ appears in every \pmc\ for $\R_+[s_0]$. Thus $[D_1]$ cannot be a nef class since $[D]\notin\langle\eq{\R_+[s_0]}\rangle_{\R_+}$ by assumption. Hence there has to be a geometrically extremal ray $R_1$ of $\mor{X_1}$ such that $[D_1]$ is negative on $R_1$. Proposition~\ref{prop:inductionstep} yields that $R_1$ is $K_{X_1}$--negative since $[D_1]\ldot[s_0^1]>0$. The contraction of the ray $R_1$ cannot be a fibre contraction since $[D_1]$ is effective and since $[D]\notin\langle\eq{\R_+[s_0]}\rangle_{\R_+}$, it cannot be a divisorial contraction. Therefore, $R_1=\R_+[s_1]$ is a small extremal ray with exceptional locus $S_1$. Without loss of generality, we may assume that $S_1$ is irreducible and hence $S_1\iso\pro^2$.
Let 
\[
  \xymatrix{
      X_1 \ar@{-->}[rr]^{\phi_2} \ar[rd]_{\ph_1}  &     &  X_2 \ar[dl]^{\ph^+_1}  \\
                             &  Y_1  &
  }
\]
be the flip diagram for $\R_+[s_1]$. With the notation of Figure~\ref{fig:xx}, Remark~\ref{rmk:inductionstep} yields that \[\mor{X_2}=\nic{X_2}+\R_+[s_0^2]+\R_+[s_1^2],\] where $\nic{X_2}$ is entirely $K_{X_2}$--negative and $s_0^2,s_1^2$ are the only $K_{X_2}$--non-negative curves on $X_2$. Lemma~\ref{lem:explicit} yields that \[[D_2]\ldot[s_1^2]=(\phi_2)_*([D_1])\ldot[s_1^2]=[D_1]\ldot\npb{(\phi_2)}([s_1^2])=-[D_1]\ldot[s_1]>0.\]

\begin{claim}\label{cl:l2}$\mor{X_2}$ has a small $K_{X_2}$--negative extremal ray $R_2$.\end{claim}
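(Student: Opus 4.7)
The plan is to mimic Step~2 one level up, using the decomposition $\mor{X_2}=\nic{X_2}+\R_+[s_0^2]+\R_+[s_1^2]$ from Remark~\ref{rmk:inductionstep} together with the fact that $\nic{X_2}$ is entirely $K_{X_2}$-negative. Concretely, I will produce an extremal ray $R_2$ of $\mor{X_2}$ with $[D_2]\cdot R_2<0$, force $R_2$ to lie inside $\nic{X_2}$ by ruling out the two $K_{X_2}$-nonnegative rays $\R_+[s_0^2]$ and $\R_+[s_1^2]$, and finally eliminate fibre-type and divisorial contractions to conclude that $R_2$ is small.

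\textbf{Producing a ray $R_2$ negative on $[D_2]$.} By Lemma~\ref{lem:existence} and Theorem~\ref{thm:termination}, the partial flip sequence $X_0\stackrel{\phi_1}\bir X_1\stackrel{\phi_2}\bir X_2$ extends to a full \pmc\ for $\R_+[s_0]$, so $X_2\in\textup{Poly}(\R_+[s_0])$. Writing $\Phi_2:=\phi_2\circ\phi_1$, this yields $(\Phi_2)^*\bigl(\eq{X_2}_{nef}\cup\eq{X_2}_{div}\bigr)\subseteq\eq{\R_+[s_0]}$. If $[D_2]$ were nef on $X_2$, expanding it as a nonnegative combination of nef generators and pulling back via $\Phi_2$ would place $[D]\in\langle\eq{\R_+[s_0]}\rangle_{\R_+}$, contradicting our standing hypothesis on $D$. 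Since $\mor{X_2}$ is polyhedral by Proposition~\ref{prop:inductionstep}, some extremal ray $R_2=\R_+[\nu]$ must therefore satisfy $[D_2]\cdot\nu<0$.

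\textbf{Locating $R_2$ in $\nic{X_2}$ and verifying smallness.} The extremal rays of $\mor{X_2}$ are either $\R_+[s_0^2]$, $\R_+[s_1^2]$, or extremal rays of $\nic{X_2}$. The equality $[D_2]\cdot[s_1^2]>0$ already noted above rules out the second possibility. To exclude the first, I would apply Lemma~\ref{lem:explicit}(ii) to the flip $\phi_2$ to obtain $\npb{(\phi_2)}([s_0^2])=[s_0^1]+k[s_1]$ for some $k\geq0$, and then verify that
\[[D_2]\cdot[s_0^2]=[D_1]\cdot[s_0^1]+k\,[D_1]\cdot[s_1]\geq0.\]
Granting this, $R_2$ is an extremal ray of $\nic{X_2}$ and hence $K_{X_2}$-negative. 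Its contraction cannot be of fibre type, since the effectivity of $D_2$ forces $[D_2]\cdot\gamma\geq0$ for the class $\gamma$ of a curve in a general fibre. Nor can it be divisorial: irreducibility of $D_2$ together with $[D_2]\cdot\nu<0$ would force $D_2$ to coincide with the exceptional divisor $E$ of the contraction, giving $[D]=(\Phi_2)^*([E])\in\langle\eq{X_2}_{div}\rangle_{\R_+}\subseteq\langle\eq{\R_+[s_0]}\rangle_{\R_+}$, again a contradiction. Thus $R_2$ is small.

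\textbf{Main obstacle.} The delicate point in this plan is verifying $[D_2]\cdot[s_0^2]\geq0$. The pullback formula expresses this number as the positive term $[D_1]\cdot[s_0^1]$ plus a possibly negative contribution $k\,[D_1]\cdot[s_1]$, so unlike the purely formal treatment of $[s_1^2]$ one must understand geometrically how the strict transform $s_0^2$ can intersect the flipped locus $S_1^+$; ideally one shows that $s_0^2$ is disjoint from $S_1^+$ so that $k=0$ in Lemma~\ref{lem:explicit}. All other steps are direct analogues of Step~2.
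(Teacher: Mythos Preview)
Your outline matches the paper's argument almost exactly: find a $[D_2]$-negative extremal ray, rule out the two $K_{X_2}$-nonnegative rays $\R_+[s_0^2]$ and $\R_+[s_1^2]$, then eliminate fibre and divisorial types. The only substantive gap is precisely the one you flag: establishing $[D_2]\ldot[s_0^2]\geq0$, equivalently $k=0$ in $\npb{(\phi_2)}([s_0^2])=[s_0^1]+k[s_1]$.

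The paper resolves this not on $X_2$ but by going all the way back to $X_0$ and exploiting the Fano hypothesis there. Rather than analyse $s_0^2\cap S_1^+$ directly, it takes an arbitrary line $s_1\subset S_1\cong\pro^2$ and pulls it back along $\phi_1$: Lemma~\ref{lem:explicit} gives $\npb{(\phi_1)}([s_1])=[f_1]+k[s_0]$ with $k\geq0$ and equality iff $s_1$ is disjoint from $s_0^1$. Now use the canonical class: by Equation~\eqref{eq:number_one} one has $[K_{X_1}]\ldot[s_1]=-1$, and since $\phi_1^*[K_{X_1}]=[K_{X_0}]$ and $[K_{X_0}]\ldot[s_0]=-1$, the projection formula yields
\[
-1=[K_{X_0}]\ldot[f_1]+k\,[K_{X_0}]\ldot[s_0]=[K_{X_0}]\ldot[f_1]-k,
\]
so $[K_{X_0}]\ldot[f_1]=k-1$. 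Because $X_0$ is Fano, the left side is strictly negative, forcing $k=0$. Hence every line in $S_1$ misses $s_0^1$, so $s_0^1\cap S_1=\emptyset$, and consequently $s_0^2\cap S_1^+=\emptyset$. This gives $\npb{(\phi_2)}([s_0^2])=[s_0^1]$ and therefore $[D_2]\ldot[s_0^2]=[D_1]\ldot[s_0^1]>0$, closing your gap. Note that this is the one place where the Fano assumption on $X_0$ is genuinely used; without it the disjointness would not follow from the numerics alone.
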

\begin{cproof}[\ref{cl:l2}] Assume that there is no small $K_{X_2}$--negative extremal ray in $\mor{X_2}$. Then, by definition, $\phi_2\circ\phi_1$ is a \pmc~for $\R_+[s_0]$. Since $[D]\notin\langle\eq{\R_+[s_0]}\rangle_{\R_+}$, the divisor class $[D_2]=(\phi_2)_*\bigl((\phi_1)_*([D])\bigr)$ is not nef and there exists a geometrically extremal ray $R$ of $\mor{X_2}$ such that $D_2$ is negative on $R$. The intersection $K_{X_2}\ldot R$ cannot be negative. If it was, then the contraction of the ray $R$ would be either a divisorial or a fibre contraction. However, it cannot be a fibre contraction since $D_2$ is effective and it cannot be a divisorial contraction since $[D]\notin\langle\eq{\R_+[s_0]}\rangle_{\R_+}$. The ray $R$ is therefore $K_{X_2}$--non-negative and Proposition~\ref{prop:inductionstep} shows that $R$ is either the ray $\R_+[s_1^2]$ or the ray $\R_+[s_0^2]$. However, we have already seen that $[D_2]\ldot[s_1^2]>0$. Thus, our assumption yields that $[D_2]\ldot[s_0^2]<0$. To conclude our argumentation, we will show that in fact $[D_2]\ldot[s_0^2]>0$, which will yield the desired contradiction.

If $s_0^1$ and $S_1$ are disjoint, then $\npb{(\phi_2)}([s_0^2])=[s_0^1]$ by Lemma~\ref{lem:explicit} and we obtain that \begin{equation}\label{eq:pos2}[D_2]\ldot[s_0^2]=(\phi_2)^*([D_2])\ldot\npb{(\phi_2)}([s_0^2])=[D_1]\ldot[s_0^1]>0.\end{equation}
Moreover, an analogous computation shows that $[K_{X_2}]\ldot[s_0^2]=1$ in this case.

To prove Claim~\ref{cl:l2} it is thus sufficient to show that $s_0^1$ does not intersect the surface $S_1\iso\pro^2$. To see this, we will investigate the numerical pullback of $[s_1]$, which is the class of an arbitrary line in the surface $S_1\iso\pro^2$, via $\phi_1$ on $X_0$. Lemma~\ref{lem:explicit} gives \[\npb{(\phi_1)}([s_1])=[f_1]+k[s_0],\] where $k\geq0$ with equality iff $s_1$ and $s_0^1$ are disjoint. See Figure~\ref{fig:xy}.
\begin{figure}
	\centering
	\includegraphics[width=12cm]{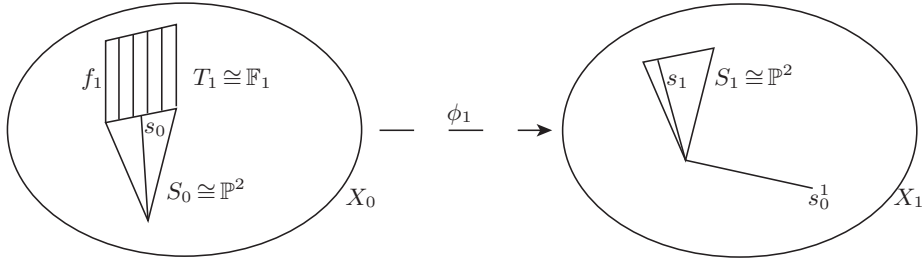}
		\begin{picture}(0,0)%
			\put(-13.0,20.0){\small{$X_1$}}
			\put(-218.0,20.0){\small{$X_0$}}
			\put(-80.0,65.0){\small{$S_1\iso\pro^2$}}
			\put(-45.0,20.0){\small{$s_0^1$}}
			\put(-98.0,65.0){\small{$s_1$}}
			\put(-275.0,65.0){\small{$T_1\iso\mathbb{F}_1$}}
			\put(-285.0,22.0){\small{$S_0\iso\pro^2$}}
			\put(-293.5,47.0){\small{$s_0$}}
			\put(-317.0,65.0){\small{$f_1$}}
			\put(-180.0,52.0){\small{$\phi_1$}}
		\end{picture}
	\caption{Sketch for the proof of Claim~\ref{cl:l2}.}
	\label{fig:xy}
\end{figure}
Equation~\eqref{eq:number_one} yields that
\[-1=[K_{X_1}]\ldot[s_1]=(\phi_1)^*([K_{X_1}])\ldot\npb{(\phi_1)}([s_1])=[K_{X_0}]\ldot([f_1]+k[s_0])=[K_{X_0}]\ldot[f_1]-k\]
Since $X_0$ is Fano, we obtain $0>[K_{X_0}]\ldot[f_1]=k-1\Ra k=0$. This yields that $s_0^1$ is disjoint from any line in $S_1\iso\pro^2$. Thus $s_0^1$ is disjoint from $S_1$ and hence $[D_2]\ldot[s_0^2]>0$ by Equation~\eqref{eq:pos2}. In particular, the curves $s_0^2$ and $s_1^2$ are disjoint. Claim~\ref{cl:l2} is therefore shown.
\end{cproof}\hfill $\square_{\text{Claim~\ref{cl:l2}}}$

So far we have shown the following.
\begin{summ}
The curves $s_0^2$ and $s_1^2$ are disjoint and they are the only irreducible curves in $X_2$ with $[s_0^2],[s_1^2]\in\mor{X_2}_{K_{X_2}\geq0}$. Moreover, we know that $\mor{X_2}=\nic{X_2}+\R_+[s_0^2]+\R_+[s_1^2]$, that $[s_0^2]\ldot[K_{X_2}]=1=[s_1^2]\ldot[K_{X_2}]$ and that $[s_0^2]\ldot[D_2]>0,[s_1^2]\ldot[D_2]>0$. The divisor $D_2$ is negative on a small $K_{X_2}$--negative extremal ray $R_2=\R_+[s_2]$ of $\mor{X_2}$ and $\nic{X_2}$ is entirely $K_{X_2}$--negative.
\end{summ}

\subsection*{Step 3, extending an existing flip sequence}

Now assume that we have constructed a sequence of flips of length $n$, as in Figure~\ref{fig:xx}, with the following properties
\begin{enumerate}[(A$_n$)]
  \item\label{A} The $X_1,\ldots,X_n$ are \polycone s
    and \[\mor{X_n}=\nic{X_n}+\R_+[s_0^n]+\ldots+\R_+[s_{n-1}^n],\] where
    $\nic{X_n}\subset\left(K_{X_n}\right)_{<0}$, the curves $s_0^n,\ldots,s_{n-1}^n$ are pairwise disjoint and are the only
    $K_{X_n}$--non-negative curves in $X_n$.

  \item\label{B} We have $[K_{X_n}]\ldot[s_i^n]=1$, for $i=0,\ldots,n-1$.

  \item\label{C} The strict transform $D_n\subset X_n$ of $D$ under
    $\phi:=\phi_n\circ\ldots\circ\phi_1$ is positive on all $K_{X_n}$--non-negative
    extremal rays $\R_+[s_i^n]$ of $\mor{X_n}$.
    
   \item\label{D} $D_n$ is negative on a small $K_{X_n}$--negative 
   	extremal ray $R_n=\R_+[s_n]$ of $\mor{X_n}$.
\end{enumerate}
We may assume that the exceptional locus $S_n$ of the corresponding extremal contraction is irreducible.

Theorem~\ref{thm:kawamata} and Corollary~\ref{cor:kawamata} yield that
$S_n\iso\pro^2$ and that the flip \[\phi_{n+1}:X_n\bir X_{n+1}\] of $R_n$
exists. Remark~\ref{rmk:inductionstep} yields
that \[\mor{X_{n+1}}=\nic{X_{n+1}}+\R_+[s_0^{n+1}]+\ldots+\R_+[s_n^{n+1}],\]
where $\nic{X_{n+1}}$ is entirely $K_{X_{n+1}}$--negative and the $s_i^{n+1}$ are the only $K_{X_{n+1}}$--non-negative curves on $X_{n+1}$.

In order to apply this construction inductively, and thus construct an infinite flip sequence, it suffices to show that the sequence $X_0 \dasharrow \cdot \dasharrow X_{n+1}$ again satisfies the  properties~(\ref{A}$_{n+1}$)--(\ref{D}$_{n+1}$). It will turn out that they are all corollaries of the following claim.

\begin{claim}\label{cl:positive}
    The curves $s^n_i$, $i=0,\ldots,n-1$, and the surface $S_n$ are disjoint.
\end{claim}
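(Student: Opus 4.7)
I prove Claim~\ref{cl:positive} by induction on $n$; the base case $n = 1$ is the content of the computation inside the proof of Claim~\ref{cl:l2}. For the inductive step I argue by contradiction: suppose $s_i^n \cap S_n \neq \emptyset$ for some $i \in \{0,\ldots,n-1\}$, pick a point $p$ in this intersection, and choose a line $\ell \subset S_n \iso \pro^2$ through $p$ generic enough that at each intermediate stage the iterated strict transform of $\ell$ is distinct from each of the finitely many rational curves $s_j^m$. The strategy is to iteratively pull back the class $[\ell]$ through the whole flip sequence down to $X_0$ and exploit that $X_0$ is Fano.

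Set $c_n := [\ell]$ and $c_{m-1} := \npb{\phi_m}(c_m) \in \Nc{X_{m-1}}$. At each step Lemma~\ref{lem:explicit}(ii) expresses the pullback of an irreducible curve class not contained in $s_{m-1}^m$ as the sum of a strict-transform class and a non-negative integer multiple of $[s_{m-1}]$. The inductive hypothesis (Claim~\ref{cl:positive} at every stage $m' < n$) implies $s_{m'-1}^{m'} \cap S_{m'} = \emptyset$, which, together with the pairwise disjointness from (A$_{m'}$), forces each auxiliary class $[s_j^{m'}]$ that appears in the iteration to pull back to its strict-transform class without picking up an additional $[s_{m'-1}]$-contribution. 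A small induction along the sequence then yields
\[
  c_0 \;=\; [\widetilde{\ell}] \;+\; \sum_{j=0}^{n-1} a_j\, [\widetilde{s}_j],
\]
with $a_j \in \N$, where $\widetilde{\ell}$ and $\widetilde{s}_j$ denote the strict transforms of $\ell$ and $s_j$ in $X_0$ along the flip sequence, each the class of an irreducible curve.

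By the projection formula, combined with the compatibility $(\phi_m)_*[K_{X_{m-1}}] = [K_{X_m}]$ valid because each $\phi_m$ is an isomorphism in codimension one, we obtain $[K_{X_0}] \ldot c_0 = [K_{X_n}] \ldot [\ell] = -1$, the last equality by Equation~\eqref{eq:number_one} applied to the line $\ell$ in the fibre $S_n$ of $\varphi_n$. Since $X_0$ is Fano, every nonzero class of an irreducible curve on $X_0$ pairs with $[K_{X_0}]$ to an integer $\leq -1$; hence the identity $-1 = [K_{X_0}] \ldot [\widetilde{\ell}] + \sum_j a_j\, [K_{X_0}] \ldot [\widetilde{s}_j]$ forces $a_j = 0$ for every $j$. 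On the other hand, the chain of isomorphisms $\phi_m^{-1} : X_m \setminus s_{m-1}^m \iso X_{m-1} \setminus S_{m-1}$ transports $p$ to a sequence $p^{(m)} \in \ell^{(m)} \cap s_i^m$ for $m$ ranging from $n$ down to $i+1$, the iteration being possible because (A$_m$) forces $s_i^m \cap s_{m-1}^m = \emptyset$ for each $m > i+1$. Hence $\ell^{(i+1)} \cap s_i^{i+1} \neq \emptyset$, and Lemma~\ref{lem:explicit}(ii) produces a coefficient of at least one on $[s_i]$ in $\npb{\phi_{i+1}}([\ell^{(i+1)}])$; this contribution survives the remaining pullbacks via $\phi_i,\ldots,\phi_1$ to yield $a_i \geq 1$, contradicting the vanishing just derived. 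The main obstacle is the careful bookkeeping of the iterated pullback---in particular, checking that no cancellation of the $[s_i]$-coefficient occurs during the last $i$ steps, which requires the inductive hypothesis at stage $i$ to control $s_i \cap s_{i-1}^i$ and analogous intersections at earlier stages.
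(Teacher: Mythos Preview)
Your argument is correct, but it follows a genuinely different route from the paper's proof. The paper argues by downward induction on the index~$i$ \emph{inside} the fixed stage~$n$: it first shows $s_{n-1}^n \cap S_n = \emptyset$ by pulling back a single line along only one flip $\phi_n$ and using that the strict transform $f_n$ in $X_{n-1}$ cannot be one of the finitely many $K_{X_{n-1}}$--non-negative curves $s_0^{n-1},\ldots,s_{n-2}^{n-1}$ (this forces $l=0$ in $\npb{(\phi_n)}([s_n])=[f_n]+l[s_{n-1}]$ exactly as in Claim~\ref{cl:l2}). Having this, the flip $\phi_n$ is an isomorphism near $S_n$, so $S_n$ transports to a copy $T_n\iso\pro^2$ in $X_{n-1}$, and the identical one-step computation on $X_{n-1}$ handles $s_{n-2}^{n-1}\cap T_n$; iterating gives all~$i$.

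You instead pull the line $\ell$ through the entire flip sequence back to $X_0$ in one shot and invoke the Fano property of $X_0$ to squeeze all coefficients $a_j$ to zero. This is elegant in that the contradiction comes from a single global inequality, but it buys that cleanliness at the cost of heavier bookkeeping: you must know in advance that each auxiliary $[s_j]$-term propagates through the remaining pullbacks without acquiring further contributions, and this is exactly the inductive hypothesis (the claim at all stages $<n$). The paper's local approach needs only Property~(A$_m$) at each step and never invokes the claim itself at earlier~$n$. A small notational slip: what you call ``auxiliary class $[s_j^{m'}]$'' is not the paper's $s_j^{m'}$ (which exists only for $j<m'$) but rather the strict transform in $X_{m'}$ of a line $s_j\subset S_j$ for $j>m'$; the argument is unaffected. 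Your genericity assumption on $\ell$ (that no $\ell^{(m)}$ coincides with a curve $s_j^m$) is not fully justified, but one can bypass it by the same $K$-degree comparison the paper uses: any such coincidence would force $K_{X_m}\cdot\ell^{(m)}\geq 1$, incompatible with $K_{X_n}\cdot\ell=-1$.
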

\begin{cproof}[\ref{cl:positive}]
We will prove that $s_{n-1}^n$ and $S_n$ are disjoint first. With the notation of Figure~\ref{fig:xx}, if $s_n\subset S_n\iso\pro^2$ is any line, Lemma~\ref{lem:explicit} gives \[\npb{(\phi_n)}([s_n])=[f_n]+l[s_{n-1}],\] where $l\geq0$ with equality iff $s_{n-1}^n$ and $s_n$ are disjoint. Equation~\eqref{eq:number_one} yields that
\[-1=[K_{X_n}]\ldot[s_n]=(\phi_n)^*([K_{X_n}])\ldot\npb{(\phi_n)}([s_n])=[K_{X_{n-1}}]\ldot([f_n]+l[s_{n-1}])\] and we obtain that \[[K_{X_{n-1}}]\ldot[f_n]=l-1.\]
Note that $[K_{X_{n-1}}]\ldot[f_n]<0$, for otherwise $f_n$ is one of the curves $s_i^{n-1}$, which is impossible since all $s_i^n$ are $K_X$--non-negative by Property~(\ref{B}$_n$) and $[K_{X_n}]\ldot[s_n]=-1$ by Equation~\eqref{eq:number_one}. We obtain that  $0>[K_{X_{n-1}}]\ldot[f_n]=l-1\Ra l=0$ and hence $s_{n-1}^n$ is disjoint from any line in $S_n\iso\pro^2$. It follows that $s_{n-1}^n$ and $S_n$ are disjoint. In particular, the curves $s_{n-1}^{n+1}$ and $s_n^{n+1}$ are disjoint too.

We go on with the curve $s_{n-2}^n$. Note that $s_{n-1}^n$ and $S_n$ are disjoint by the previous step and $s_{n-2}^n$ is disjoint from $s_{n-1}^n$ by Property~(\ref{A}$_n$). This yields that the flip $\phi_n$ is an isomorphism along $S_n$ and $s_{n-2}^n$. Thus, we can investigate the intersection of $s_{n-2}^{n-1}$ with the inverse image $T_n$ of $S_n$ under $\phi_n$, which is again isomorphic to $\pro^2$. This situation is completely analogous to the one of the previous step and the analogous computations yields that $s_{n-2}^{n-1}$ is disjoint form $T_n$. Hence $s_{n-2}^n$ is disjoint form $S_n$ and $s_{n-2}^{n+1}$ is disjoint from $s_n^{n+1}$.

In this manner, we can prove successively that the curves $s^n_i$ are disjoint from $S_n$ and that the curves $s_i^{n+1}$ are disjoint from $s_n^{n+1}$.
\end{cproof}\hfill $\square_{\text{Claim~\ref{cl:positive}}}$
\subsubsection*{Step 3a, proof of Property~(\ref{A}$_{n+1}$)}
We have already seen that \[\mor{X_{n+1}}=\nic{X_{n+1}}+\R_+[s_0^{n+1}]+\ldots+\R_+[s_n^{n+1}],\]
where $\nic{X_{n+1}}$ is entirely $K_{X_{n+1}}$--negative and the $s_i^{n+1}$ are the only $K_{X_{n+1}}$--non-negative curves on $X_{n+1}$. The fact that the curves $s_0^{n+1},\ldots,s_n^{n+1}$ are pairwise disjoint follows from Claim~\ref{cl:positive}.
\subsubsection*{Step 3b, proof of Property~(\ref{B}$_{n+1}$)}
We have \[[K_{X_{n+1}}]\ldot[s_n^{n+1}]=(\phi_{n+1})_*([K_{X_n}])\ldot[s_n^{n+1}]=-[K_{X_n}]\ldot[s_n]=1\] by Equation~\eqref{eq:number_one}.
Now let $0\leq i\leq n-1$ be an integer. Since $s_i^n$ is disjoint from $S_n$ by Claim~\ref{cl:positive}, Lemma~\ref{lem:explicit} yields that $\npb{(\phi_{n+1})}([s_i^{n+1}])=[s_i^n]$ and 
\[[K_{X_{n+1}}]\ldot[s_i^{n+1}]=(\phi_{n+1})_*([K_{X_n}])\ldot[s_i^{n+1}]=[K_{X_n}]\ldot[s_i^n]=1\]
by Property~(\ref{B}$_n$).
\subsubsection*{Step 3c, proof of Property~(\ref{C}$_{n+1}$)}
With the same argumentation as in the previous step, we obtain that
\[[D_{n+1}]\ldot[s_n^{n+1}]=(\phi_{n+1})_*([D_n])\ldot[s_n^{n+1}]=-[D_n]\ldot[s_n]>0,\]
by Lemma~\ref{lem:explicit} and that \[[D_{n+1}]\ldot[s_i^{n+1}]=(\phi_{n+1})_*([D_n])\ldot[s_i^{n+1}]=[D_n]\ldot[s_i^n]>0,\]
by Property~(\ref{C}$_n$) for $i<n$.
\subsubsection*{Step 3d, proof of Property~(\ref{D}$_{n+1}$)}
We claim that there exists a small $K_{X_{n+1}}$--negative extremal ray $R'$ of $\mor{X_{n+1}}$ such that $D_{n+1}$ is negative on $R'$. Since $[D]\notin\langle\eq{\R_+[s_0]}\rangle_{\R_+}$, the divisor class $[D_{n+1}]=(\phi_{n+1})_*((\phi)_*([D]))$ is not nef and there exists a geometrically extremal ray $R'$ of $\mor{X_{n+1}}$ such that $D_{n+1}$ is negative on $R'$. We have already seen Property~(\ref{C}$_{n+1}$), in other words, that $[D_{n+1}]$ is positive on the rays $\R_+[s_0^{n+1}],\ldots,\R_+[s_n^{n+1}]$. Therefore, Property~(\ref{A}$_{n+1}$) shows that $R'$ is contained in $\nic{X_{n+1}}$ and hence $K_{X_{n+1}}$--negative. However, the corresponding extremal contraction cannot be a fibre contraction since $D_{n+1}$ is effective and since $[D]\notin\langle\eq{\R_+[s_0]}\rangle_{\R_+}$, it cannot be a divisorial contraction. Thus the extremal contraction of the ray $R'$ is small and Property~(\ref{D}$_{n+1}$) is therefore proved. 
\subsection*{Step 4, summary, end of proof}
Summarising, we can note the following. Together the properties $[D]\notin\langle\eq{\R_+[s_0]}\rangle_{\R_+}$ and $[D]\ldot[s_0]<0$ yield that there exists a small extremal ray $R_1$ in the Mori cone of the flipped variety $X_1$ such that the strict transform of $D$ is negative on $R_1$. By construction, the class of the strict transform  of $D$ is not contained in the set $\langle\eq{R_1}\rangle_{\R_+}$ and we can iterate this process. By induction, this construction yields an infinite sequence flips and hence the desired contradiction. This concludes the proof of Proposition~\ref{prop:positive}.
\qed

\subsection{Proof of Proposition~\ref{prop:mainprop}}

The proof is completely analogous to the proof of Theorem~\ref{thm:mainthm}. 
The inclusion \[\mov{X}\subseteq\{\gamma\in\Nc{X}\mid \gamma\ldot\Delta\geq0\text{ for all }\Delta\in\left\{[E_1],\ldots,[E_k]\right\}\cup\nef{X}\}\] can be seen directly, or deduced by Theorem~\ref{thm:eq}. To prove the other inclusion, recall the fact that the Mori cone of a smooth threefold has no small extremal rays. Now we can conclude the proof with the same argumentation as in the proof of Theorem~\ref{thm:mainthm}.
\qed
\section{An example for the computation of the moving cone}
A priori, the set $\eq{X}$ defined in Theorem~\ref{thm:mainthm} seems to be very large and cumbersome. We will now sketch an example   which shows that the situation is not that bad and which illustrates the computation of the set $\eq{X}$. For details see \cite[Example~4.34]{diss}.

\begin{bsp}[See \protect{\cite[Example~4.34]{diss}}]\label{bsp:flipsequence}
Let $\pi':\sE\ra\pro^2$ be the vector bundle $\cO_{\pro^2}\oplus(\cO_{\pro^2}(1)^{\oplus 2})$ over $\pro^2$. The variety \[Y:=\pro(\sE)\] is a smooth fourfold and the induced projection map $\pi:Y\ra\pro^2$ has a section corresponding to the quotient $\sE\ra\cO_{\pro^2}\ra0$. Let $S'\subset Y$ be the image of this section, let $F'$ be a fibre of the projection $\pi$ and let $l$ be a line in $F'$ which does not intersect the plane $S'$. 
We denote by $\Gamma'$ the class of the pullback of a hyperplane in $\pro^2$ and by $\Lambda'$ the class of the line bundle $\cO_{Y}(1)$. Moreover, we denote by $\gamma'$ the class of a line in $S'$ and by $\lambda'$ the class of a line in a fibre of $\pi$.

Now let $\mu:X\ra Y$ be the blow up of $Y$ in $l$, denote by $E$ the exceptonal divisor of the blow up and denote by $\eta$ the class of a curve in a fibre of $\mu|_E:E\ra l$. We fix some more notation.

Let $F$  and $S$ denote the strict transforms under $\mu$ of $F'$ and $S'$, respectively. The class of the strict transform under $\mu$ of a divisor with class $\Gamma'$ which does not contain $F'$ is given by $\mu^*(\Gamma')=:\Gamma$, of a divisor with class $\Gamma'$ which contains the fibre $F'$ by $\Gamma-[E]$, and of a divisor with class $\Lambda'$ by $\mu^*(\Lambda')-[E]=:\Lambda$. The strict transform of a divisor with class $\Lambda'-\Gamma'$ is given by $\Lambda-\Gamma+[E]$.

The class of the strict transform under $\mu$ of a curve with class $\gamma'$ is given by $\npb{\mu}(\gamma')=:\gamma$, of a curve in $F'$ by $\npb{\mu}(\lambda')-\eta=:\nu$, and of a general curve with class $\lambda'$ by $\npb{\mu}(\lambda')=:\lambda=\nu+\eta$.

\emph{Fact:} The variety $X$ is a smooth Fano fourfold and one can check that \[\mor{X}=\langle\gamma,\nu,\eta\rangle_{\R_+}.\]
The extremal contractions of the extremal rays $\R_+\nu$ and $\R_+\gamma$ are small with exceptional loci $F$ and $S$, respectively.
Moreover, some short computations show that $\mor{X}$ is cut out of $\Nc{X}$ by the nef divisor classes $\Gamma$, $\Lambda$ and $\Lambda+[E]=\mu^*(\Lambda')$.
Thus we have \[\eq{X}_{nef}=\{\Gamma,\Lambda,(\Lambda+[E])\}\text{ and }\eq{X}_{div}=\{[E]\}.\]
Now we will compute the \pmc s for $\R_+\nu$ and $\R_+\gamma$, and we will start with the sequence for $\R_+\nu$. Let $\phi_1:X\bir X_1$ be the flip of the small extremal ray $\R_+\nu$. We set 
$\Gamma_1:=(\phi_1)_*(\Gamma)$, 
$\Lambda_1:=(\phi_1)_*(\Lambda)$ and 
$[E_1]:=(\phi_1)_*([E])$.
Let $\nu_1$ denote the class $-\npf{(\phi_1)}(\nu)$ and let $F_1$ be the indeterminacy locus of $\phi_1$ in $X_1$.

The class of the strict transform under $\phi_1$ of a curve with class $\gamma$ is given by $\npf{(\phi_1)}(\gamma)-\nu_1=:\gamma_1$ if the curve intersects $F$, and by $\gamma_1+\nu_1$ otherwise.

The class of the strict transform under $\phi_1$ of a curve with class $\eta$ is given by $\npf{(\phi_1)}(\eta)-\nu_1=:\eta_1$ if the curve intersects $F$, and by $\eta_1+\nu_1$ otherwise.

The class of the strict transform of a curve with class $\lambda$ is given by $\npf{(\phi_1)}(\lambda)=:\lambda_1$.
Now one proves that \[\mor{X_1}=\langle\gamma_1,\nu_1,\lambda_1\rangle_{\R_+}\] and that the extremal contractions of $\R_+\gamma_1$ and $\R_+\lambda_1$ are of fibre type.
Thus there is only one \pmc\ for $\R_+\nu$, which has length one. Furthermore, $\mor{X_1}$ is cut out of $\Nc{X_1}$ by the hyperplanes $\Gamma_1^{\bot}$, $\Lambda_1^{\bot}$ and $(\Gamma_1-[E_1])^{\bot}$.
We obtain that \[\eq{X_1}_{nef}=\{\Gamma,\Lambda,(\Gamma-[E])\}\text{ and }\eq{X_1}_{div}=\emptyset.\]

We go on with the \pmc s for $\gamma$. Let $\phi_2:X\bir X_2$ be the flip of the small extremal ray $\R_+\gamma$. We set 
$\Gamma_2:=(\phi_2)_*(\Gamma)$, 
$\Lambda_2:=(\phi_2)_*(\Lambda)$ and 
$[E_2]:=(\phi_2)_*([E])$.
Set $\gamma_2:=-\npf{(\phi_2)}(\gamma)$ and let $S_2$ be the indeterminacy locus of $\phi_2$ in $X_2$.

The class of the strict transform under $\phi_2$ of a curve with class $\nu$ is given by $\npf{(\phi_2)}(\nu)-\gamma_2=:\nu_2$ if the curve intersects $S$, and by $\nu_2+\gamma_2$ otherwise.

The class of the strict transform under $\phi_2$ of a curve with class $\lambda$ is given by $\npf{(\phi_2)}(\lambda)-\gamma_2=:\lambda_2$ if the curve intersects $S$, and by $\lambda_2+\gamma_2$ otherwise.

The class of the strict transform of a curve with class $\eta$ is given by $\npf{(\phi_2)}(\eta)=:\eta_2$.
Some short computations show that \[\mor{X_2}=\langle\gamma_2,\nu_2,\eta_2\rangle_{\R_+},\] that the extremal contraction of $\R_+\nu_2$ is of fibre type and that the extremal contraction of $\R_+\eta_2$ is divisorial with exceptional divisor $E_2$. 

Thus there is only one \pmc\ for $\R_+\gamma$, which has length one, as well. Furthermore, $\mor{X_2}$ is cut out of $\Nc{X_2}$ by the hyperplanes $\Lambda_2^{\bot}$, $(\Lambda_2+[E_2])^{\bot}$ and $(\Lambda_2-\Gamma_2+[E_2])^{\bot}$.
This yields that \[\eq{X_2}_{nef}=\{\Lambda,(\Lambda+[E]),(\Lambda-\Gamma+[E])\}\text{ and }\eq{X_2}_{div}=\{[E]\}.\]

Combining all this, we have \[\eq{X}=\{\Lambda,\Gamma,[E],(\Gamma-[E]),(\Lambda+[E]),(\Lambda-\Gamma+[E])\}\] and a short computation gives
\begin{align*}
\mov{X}&=\{\varsigma\in\Nc{X}\mid\varsigma\ldot\Delta\geq 0,\text{ for all }\Delta\in\eq{X}\}\\
&=\langle\lambda,(\lambda+\gamma),(\gamma+\nu)\rangle_{\R_+}.
\end{align*}
The complete situation is sketched in the following picture, where the hatched areas inside the Mori cones illustrate the moving cones of $X$, $X_1$ and $X_2$.
\begin{figure}[htbp]
  \centering
  \includegraphics[width=12.5cm]{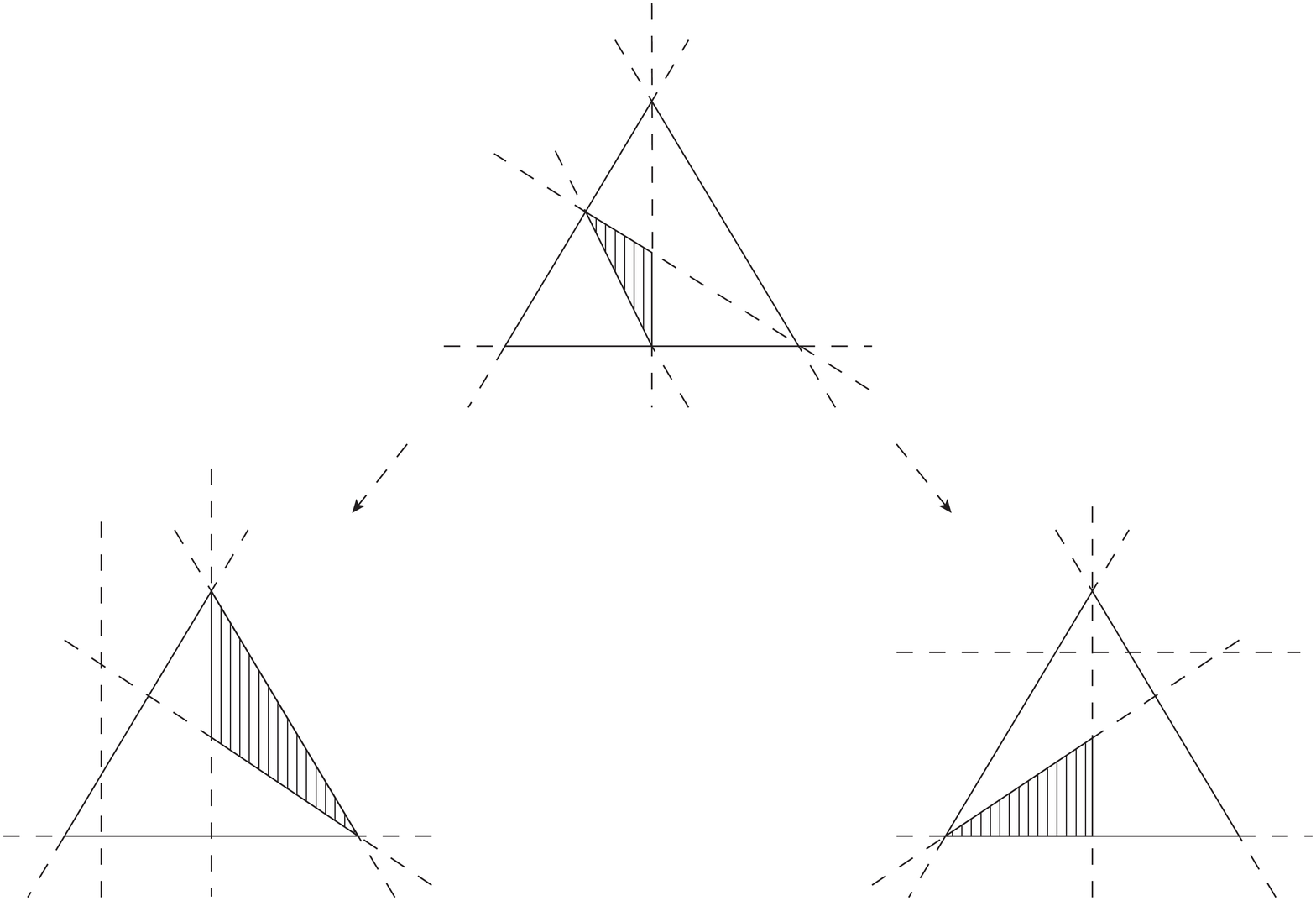}%
		\begin{picture}(0,0)%
				\put(-100.0,240.0){\tiny{cross-sections through}}
				\put(-100.0,230.0){\tiny{the Néron-Severi spaces}}
				\put(-100.0,220.0){\tiny{of $X$,$X_1$ and $X_2$.}}
				\put(-135.0,120.0){\tiny{$\npf{(\phi_2)}$}}
				\put(-247.5,120.0){\tiny{$\npf{(\phi_1)}$}}
				\put(-260.0,240.0){\tiny{$\mor{X}$}}
				\put(-240.0,140.0){\tiny{$\Lambda^{\bot}$}}
				\put(-250.0,152.5){\tiny{$\Gamma^{\bot}$}}
				\put(-180.0,130.0){\tiny{$(\Gamma-[E])^{\bot}$}}
				\put(-198.0,140.0){\tiny{$[E]^{\bot}$}}
				\put(-225.0,237.0){\tiny{$(\Lambda+[E])^{\bot}$}}
				\put(-123.0,140.0){\tiny{$(\Lambda-\Gamma+[E])^{\bot}$}}
				\put(-177.5,216.0){\tiny{$\gamma$}}
				\put(-145.5,145.0){\tiny{$\eta$}}
				\put(-220.0,145.0){\tiny{$\nu$}}
				\put(-365.0,112.0){\tiny{$\mor{X_1}$}}
				\put(-339.0,12.0){\tiny{$\nu_1$}}
				\put(-267.0,12.0){\tiny{$\lambda_1$}}
				\put(-291.0,86.0){\tiny{$\gamma_1$}}
				\put(-355.0,7.0){\tiny{$\Lambda_1^{\bot}$}}
				\put(-255.0,-5.0){\tiny{$(\Gamma_1-[E_1])^{\bot}$}}
				\put(-362.5,22.0){\tiny{$\Gamma_1^{\bot}$}}
				\put(-355.0,77.0){\tiny{$[E_1]^{\bot}$}}
				\put(-322.5,9.0){\tiny{$K_{X_1}^{\bot}$}}
				\put(-322.5,124.5){\tiny{$(\Lambda_1-\Gamma_1+[E_1])^{\bot}$}}
				\put(-31.0,112.0){\tiny{$\mor{X_2}$}}
				\put(-56.0,82.0){\tiny{$\gamma_2$}}
				\put(-99.5,12.0){\tiny{$\nu_2$}}
				\put(-28.5,12.0){\tiny{$\eta_2$}}
				\put(-36.0,74.5){\tiny{$(\Gamma_2-[E_2])^{\bot}$}}
				\put(-131.0,62.5){\tiny{$K_{X_2}^{\bot}$}}
				\put(-58.5,2.0){\tiny{$[E_2]^{\bot}$}}
				\put(-11.0,2.0){\tiny{$(\Lambda_2+[E_2])^{\bot}$}}
				\put(-103.5,2.0){\tiny{$\Lambda_2^{\bot}$}}
				\put(-166.0,22.0){\tiny{$(\Lambda_2-\Gamma_2+[E_2])^{\bot}$}}
		\end{picture}%
  \caption{The hatched areas sketch the moving cones inside the Mori cones.}
  \label{fig:pflipseq4} 
\end{figure}
\end{bsp}
\def\cprime{$'$}

\end{document}